% SIAM Article Template
\documentclass[review,onefignum,onetabnum]{siamart171218}

% Information that is shared between the article and the supplement
% (title and author information, macros, packages, etc.) goes into
% ex_shared.tex. If there is no supplement, this file can be included
% directly.

% SIAM Shared Information Template
% This is information that is shared between the main document and any
% supplement. If no supplement is required, then this information can
% be included directly in the main document.

% Packages and macros go here
\usepackage{amsfonts}
\usepackage{amsmath} 
\usepackage{lipsum}
\usepackage{stmaryrd}
\usepackage{graphicx}
\usepackage{epstopdf}

\usepackage{algorithm2emod}
\ifpdf
  \DeclareGraphicsExtensions{.eps,.pdf,.png,.jpg}
\else
  \DeclareGraphicsExtensions{.eps}
\fi

% Add a serial/Oxford comma by default.

% Used for creating new theorem and remark environments
\newsiamremark{remark}{Remark}
\newsiamremark{hypothesis}{Hypothesis}
\crefname{hypothesis}{Hypothesis}{Hypotheses}
\newsiamthm{claim}{Claim}

% Sets running headers as well as PDF title and authors
\headers{Strong stability of RK time discretizations}{Z. Sun and C.-W. Shu}

% Title. If the supplement option is on, then "Supplementary Material"
% is automatically inserted before the title.
\title{Strong Stability of Explicit Runge--Kutta Time Discretizations\thanks{Submitted to the editors on Nov 25, 2018. This work is mainly done during the symposium held at ICERM in Providence RI, for celebrating 75 years of Mathematics of Computation. We would like to thank the committee members, speakers and staffs for this inspiring workshop. ZS especially wants to express his appreciation for the travel and lodging support from ICERM.}}
%\funding{This work was funded by the Fog Research Institute under contract no.~FRI-454.}}}

% Authors: full names plus addresses.
\author{Zheng Sun\thanks{Department of Mathematics, The Ohio State University,
		Columbus, OH 43210
  (\email{sun.2516@osu.edu}).}
\and Chi-Wang Shu\thanks{Division of Applied Mathematics, Brown University,
	Providence, RI 02912 
  (\email{shu@dam.brown.edu}). Research supported by ARO grant W911NF-16-1-0103
  and NSF grant DMS-1719410.}}

\usepackage{amsopn}

%%% Local Variables: 
%%% mode:latex
%%% TeX-master: "ex_article"
%%% End: 

%MACRO

\newcommand{\hf}{\frac{1}{2}}

\newcommand{\dt}{\frac{d}{dt}}
\newcommand{\dtau}{\frac{d}{d\tau}}
\newcommand{\ddtau}[1]{\frac{d^{#1}}{d\tau^{#1}}}
 
\newcommand{\veps}{\varepsilon}
\newcommand{\nmH}[1]{\| #1 \|_H}
\newcommand{\nm}[1]{\| #1 \|}
\newcommand{\qnmH}[1]{\llbracket #1 \rrbracket_H}
\newcommand{\ipH}[1]{\langle #1 \rangle_H}
\newcommand{\qipH}[1]{[ #1 ]_H}
\newcommand{\qip}[1]{[ #1 ]}
\newcommand{\ip}[1]{\langle #1 \rangle}
\newcommand{\Mod}[1]{\ (\mathrm{mod}\ #1)}

\newcommand{\cE}{\mathcal{E}}

\newcommand{\cO}{\mathcal{O}}

%\newcommand{\bf}{\mathbf{f}}

% Optional PDF information
\ifpdf
\hypersetup{
  pdftitle={Strong Stability of Explicit Runge--Kutta Time Discretizations},
  pdfauthor={Zheng Sun and Chi-Wang Shu}
}
\fi

% The next statement enables references to information in the
% supplement. See the xr-hyperref package for details.

%\externaldocument{ex_supplement}

% FundRef data to be entered by SIAM
%<funding-group>
%<award-group>
%<funding-source>
%<named-content content-type="funder-name"> 
%</named-content> 
%<named-content content-type="funder-identifier"> 
%</named-content>
%</funding-source>
%<award-id> </award-id>
%</award-group>
%</funding-group>

\begin{document}

\maketitle

% REQUIRED
\begin{abstract}
 Motivated by studies on fully discrete numerical schemes for linear hyperbolic conservation laws, we present a framework on analyzing the strong stability of explicit Runge--Kutta (RK) time discretizations for semi-negative autonomous linear systems. The analysis is based on the energy method and can be performed with the aid of a computer. Strong stability of various RK methods, including a sixteen-stage embedded pair of order nine and eight, has been examined under this framework. Based on numerous numerical observations, we further characterize the features of strongly stable schemes. A both necessary and sufficient condition is given for the strong stability of RK methods of odd linear order. 
\end{abstract}

% REQUIRED
\begin{keywords}
 Runge--Kutta methods, strong stability, energy method, hyperbolic problems, conditional contractivity.
\end{keywords}

% REQUIRED
\begin{AMS}
  65M12, 65M20, 65L06
\end{AMS}

\section{Introduction}
Explicit Runge--Kutta (RK) methods have been commonly used for time integration of hyperbolic conservation laws. In this paper, we study the strong stability of the methods in such context. We focus on autonomous linear ordinary differential equation (ODE) systems
\begin{equation}\label{eq-odes}
\frac{d}{dt}u = Lu,
\end{equation}
which are obtained from method of lines schemes for linear hyperbolic problems. $u\in \mathbb{R}^N$ and $L$ is an $N\times N$ real constant matrix, where $N$ is the degrees of freedom for the spatial discretization.  If the semidiscrete scheme honors the (weighted) $L^2$ stability of the partial differential equation (PDE), then for certain symmetric and positive definite matrix $H$, 
\begin{equation}\label{eq-sn}
L^\top H + H L \leq 0
\end{equation} 
is a semi-negative definite matrix. Here $H$ can be related with both the symmetrizer  of the PDE \cite{gustafsson1995time} and the mass matrix or quadrature weights of a Galerkin or collocation type spatial discretization. If \eqref{eq-sn} holds, then we say $L$ is semi-negative and \eqref{eq-odes} satisfies the energy decay law
\begin{equation}\label{eq-energy}
\begin{aligned}
\dt \nmH{u}^2 &= \ipH{\dt u, u}+\ipH{u, \dt  u}\\
&= \ip{Lu, H u} + \ip{u,H L u} = \ip{u,(L^\top H+H L)u} \leq 0.
\end{aligned}
\end{equation}
Here $\ipH{\cdot, \cdot} = \ip{\cdot, H \cdot}$ with $\ip{\cdot,\cdot}$ 
being the usual $l^2$ inner product in $\mathbb{R}^N$ and $\nmH{u} 
= \sqrt{\ipH{u,u}}$. We are concerned with whether this property is 
preserved at the discrete level, namely, whether
\begin{equation}\label{eq-strong}
\nmH{u^{n+1} } \leq \nmH{u^n},
\end{equation}
holds after applying an explicit RK time integrator under a suitably restricted time step. The time step constraint is referred to as the Courant--Friedrichs--Lewy (CFL) condition for numerical conservation laws. 
We would say the explicit RK method is strongly stable,
if \eqref{eq-strong} is satisfied when discretizing (\ref{eq-odes})
under the condition (\ref{eq-sn}). Note this definition of stability is 
stronger than the usual one, which states that the norm of the
numerical solution is bounded by the norm of the initial data up 
to a constant (typically dependent on the total time) \cite{gustafsson1995time}. We remark that the concept of strong stability in our context is also connected with the conditional contractivity from the ODE community \cite{spijker1983contractivity,kraaijevanger1991contractivity}, while our analysis does not rely on the circle condition, which is usually assumed in the previous works. 

Solving hyperbolic conservation laws with explicit time discretizations can be dated back to early days. They seem to be the most natural choice to avoid the inversion of large nonlinear systems resulted from spatial discretizations. Many of the first order monotone schemes, which preserves various properties of the continuous PDE, adopt Euler forward method for time marching \cite{randall1992numerical}. This flavor has been pushed further with the development of strong-stability-preserving (or total-variation-diminishing) high order time discretizations \cite{shu1988total,shu1988efficient,gottlieb1998total,gottlieb2001strong}, especially the RK methods in this family. The strong-stability-preserving RK (SSPRK) methods can be formulated as convex combinations of Euler forward steps, hence automatically preserve many properties that are achieved by the Euler forward discretization. One of the successes is to use them for integrating method of lines scheme obtained from discontinuous Galerkin (DG) spatial discretizations for hyperbolic conservation laws. The method is referred to as the RKDG method, which was developed by Cockburn et al in a series of papers \cite{rkdg1,rkdg2,rkdg3,rkdg4,rkdg5}. In the RKDG method, a limiter can be applied after each Euler forward stage to control total variation. The non-increasing total variation semi-norm is then preserved after a full time step with SSPRK methods. Another application of SSPRK methods is for designing high order positivity-preserving or bound-preserving schemes \cite{zhang2010maximum,zhang2011maximum}. The approach, which is based on a similar methodology, has received intensive attentions in recent years and has been successfully applied to various problems  \cite{zhang2010positivity,xing2010positivity,wu2015high,qin2016bound,sun2018discontinuous,sun2018entropy}. It should also be noted, although progress has been made on preserving positivity with the Euler backward method \cite{qin2018implicit}, due to the nonexistence of second or higher order methods as the positive combinations of Euler backward steps \cite{gottlieb2001strong}, it is a nontrivial task to design high order implicit positivity-preserving schemes.

Along with the popularity of explicit RK methods for hyperbolic problems, a growing attention has been paid to the role that the time integrator has played in a fully discrete scheme. One of the major issues is on the stability: whether the $L^2$ stability achieved by method of lines schemes would be preserved after the explicit time stepping.  Indeed, this issue has been raised during the initial development of the RKDG method. Even for linear advection equation, it is reported in \cite{chavent1989local} that under the usual CFL condition, the Euler forward time stepping coupled with the linear DG method is unstable, even in the sense of the weaker stability. The second order RK methods are stable only if the spatial discretization uses at most piecewise linear elements \cite{cockburn2001runge}. From then on, analyses particularly on fully discrete DG methods have been performed, such as \cite{zhang2010stability,zhang2012fully,wang2015stability,wang2016stability, sun2017stability,zhou2018stability}. But 
it seems that an universal approach of analyzing the stability is missing.

One of the attempts is to perform eigenvalue analysis. If the matrix $L$ is normal, then it can be diagonalized with an orthogonal matrix. Hence the ODE system is transformed into decoupled scalar equations. The method is strongly stable if and only if the amplifier of each decoupled equation falls within the absolute stability region of the time integrator. However, when $L$ is not normal, which is generic for $L$ obtained from method of lines schemes, eigenvalue analysis gives only necessary but not sufficient conditions on strong stability. In such situation, eigenvalue analysis should be avoided for the following reasons. Firstly, if $L$ can be diagonalized, the analysis ensures the strong stability of the transformed system. But in practice, the diagonalizing matrix can be ill-conditioned, the resulting stability would have very weak controls on the actual solution after the backward transformation \cite{levy1998semidiscrete}. Secondly, to ensure stability, we need the matrix norm of the amplifier to be properly controlled, but the eigenvalue analysis only examines the spectral radius, which is strictly smaller than the matrix norm for non-normal cases. Hence the actual time step constraint should be stricter than that from the eigenvalue analysis. The result solely relying on eigenvalue analysis can be misleading. We refer to \cite{iserles2009first} for a particular example.

A more sound way of analyzing stability is to use the energy method. The study of \eqref{eq-odes}  through this approach initiates from problems with coercive matrices $L^\top H + HL \leq -\eta L^\top H L$, where $\eta$ is a positive constant. In \cite{levy1998semidiscrete}, Levy and Tadmor proved that for coercive matrices, the classic third order and fourth order explicit RK methods are strongly stable, under the time step constraint $\tau \leq \lambda \eta$ with $\lambda = \frac{3}{50}$ for the third order method and $\lambda = \frac{1}{62}$ for the fourth order method. Later, a simpler proof was discovered based on the strong-stability-preserving RK formulation and the fact that Euler forward method is strongly stable for the 
coercive problems \cite{gottlieb2001strong}. This approach gives a significantly relaxed time step $\tau \leq \eta$ and extends the result to linear RK methods of arbitrary order. These analyses coincide with the earlier contractivity analysis in \cite{spijker1983contractivity} and \cite{kraaijevanger1991contractivity}. In their study of contractivity, or strong stability in our context, a circle condition is assumed, which is essentially equivalent to the strong stability assumption for the Euler forward steps. 

The coercivity condition typically arises from diffusive problems, but is uncommon in numerical approximations of hyperbolic problems. This motivates us to remove this assumption and consider the general semi-negative case.  The difficulty is that the operator becomes less dissipative and the Euler forward step is no longer strongly stable. In \cite{tadmor2002semidiscrete}, Tadmor proved the third order RK method is strongly stable for semi-negative $L$. Then in \cite{sun2017rk4}, we found a counter example showing that the fourth order RK method can not preserve the strong stability, although the fourth order method satisfies the necessary condition in the eigenvalue analysis and is hence strongly stable if $L$ is normal. Furthermore, in the same paper, the fourth order RK method is proven to be strongly stable in two steps. In other words, successively applying the method for two steps yields a strongly stable method with eight stages. Recently in \cite{ranocha2018l_2}, it is shown that the low storage SSPRK method of order four with ten stages also admits strong stability. 

As one can see from \cite{sun2017rk4} and \cite{ranocha2018l_2}, although the computation for analyzing RK methods with many stages looks complicated at the first glance, it only involves elementary algebraic manipulations.  A single technique, which corresponds to integration by parts for the PDE, has been repeatedly used. Inspired by the proofs in  \cite{sun2017rk4} and \cite{ranocha2018l_2}, as well as many previous analyses on DG methods, we develop a unified framework on analyzing the strong stability of explicit RK methods. The main idea is based on an induction procedure. With the aid of a computer, we can easily obtain the energy equality, which equates the energy at the next time steps with that at the current time level plus terms of specific forms. Then a sufficient condition, generalized from Lemma 2.4 in \cite{sun2017rk4}, is used to justify the strong stability. A necessary condition is also provided to exclude some methods that are not strongly stable. With this framework, we can easily examine strong stability of various RK methods, including a ninth order and eighth order embedded pair with sixteen stages. Finally, base on numerous observations, we summarize patterns in the energy equality to further characterize the strongly stable RK methods. In particular, we give a both necessary and sufficient condition for the RK methods of odd linear order to be strongly stable. We remark that the framework has its limitation, for example, it can not determine if the classic fourth order method is strongly stable.

The rest of the paper is organized as follows. In \cref{sec-2}, we present our framework on analyzing the strong stability of explicit RK methods of any order with arbitrary stages. In \cref{sec-3}, the strong stability of various methods has been examined using the framework, including linear RK methods, the classic fourth order methods (strong stability in multiple steps), several SSPRK methods and the embedded pairs used in the commercial software Mathematica. Then in \cref{sec-4}, we further investigate the energy equality to characterize the structure of strongly stable methods. Numerical dissipation of these methods for energy conserving systems has also been discussed. Finally in \cref{sec-5}, conclusions are given. 

\section{Stability analysis: a framework}\label{sec-2}
Let us drop the superscript $n$ in $u^n$. Consider an explicit RK time discretization for the linear autonomous system \eqref{eq-odes}. The scheme is of the form 
\begin{equation}\label{eq-rkscheme}
u^{n+1} = R_s u,
\end{equation}
where 
\begin{equation}\label{eq-rk}
\qquad R_s = \sum_{ k = 0}^s \alpha_s (\tau L)^k, \qquad \alpha_0 = 1, \qquad \alpha_s \neq 0.
\end{equation}
Here $\tau$ is the time step and $s$ is the number of stages. The coefficients $\{\alpha_k\}_{k=0}^s$ depend solely on the scheme itself. For an $s$-stage method, it is of linear order $p$ if and only if the first $p+1$ terms 
in the summation \eqref{eq-rk} coincide with the truncated Taylor series of $e^{\tau L}$. In particular, $p\leq s$ \cite{butcher2016numerical}.
We would like to examine the strong stability of \eqref{eq-rkscheme} under the usual CFL condition: if there exists a constant $\lambda$, such that 
\begin{equation}\label{eq-Rsulu}
\nmH{R_s u} ^2\leq \nmH{u}^2,
\end{equation}
for all $\tau \nmH{L}\leq \lambda$ and all inputs $u$. This is equivalent to
\begin{equation}
\nmH{R_s} \leq 1,
\end{equation}
under the prescribed condition and $\nmH{R_s}$ is the matrix norm of $R_s$.

A natural attempt is to adopt the following expansion to compare $\nmH{R_su}^2$ with $\nmH{u}^2$.
\begin{equation}\label{eq-exps1}
\nmH{R_su}^2 = \sum_{i ,j= 0}^s \alpha_i \alpha_j\tau^{i+j} \ipH{L^iu,L^ju} = \nmH{u}^2 + \sum_{1\leq\max\{i ,j\}\leq s} \alpha_i \alpha_j \tau^{i+j}\ipH{L^iu,L^ju}.
\end{equation}
However, each term $\ipH{L^iu,L^ju}$ may not necessarily have a sign. The idea for overcoming the difficulty is to convert $\ipH{L^i u, L^ju}$ into linear combinations of terms of the form $\nmH{L^k u}^2$, $\qnmH{L^k u}^2$ and $\qipH{L^i u, L^j u}$. Here 
\begin{equation}
\qipH{v,w} = -\ip{v, (L^\top H + H L) w}, \qquad v,w\in \mathbb{R}^N
\end{equation}
is a semi inner product and
\begin{equation}
\qnmH{v} =\sqrt{\qipH{v,v}}
\end{equation}
defines the induced semi-norm. Indeed, this can be achieved through the following induction procedure. 

\begin{proposition}[Integration by parts]\label{prop-routine}
	Suppose $j\geq i$, then
	\begin{equation}\label{eq-prop-1}
	\ipH{L^i u, L^j u} = \left\{
	\begin{matrix}
	&\nmH{L^i u }^2,&\qquad j = i,\\
	&-\frac{1}{2} \qnmH{L^i u }^2,&\qquad j = i+1,\\
	&-\ipH{L^{i+1} u, L^{j-1} u } - \qipH{L^iu,L^{j-1}u},&\qquad \text{otherwise.}
	\end{matrix}
	\right.
	\end{equation}
\end{proposition}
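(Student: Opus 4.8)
The plan is to verify the three branches directly from the definitions of $\ipH{\cdot,\cdot}$ and $\qipH{\cdot,\cdot}$, using nothing more than the symmetry of $H$ and the adjoint identity $\ip{Lv,w}=\ip{v,L^\top w}$ for the real $l^2$ inner product. The case $j=i$ requires no work at all: it is literally the definition $\ipH{L^i u, L^i u}=\nmH{L^i u}^2$.

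For the generic branch I would start from the right-hand side. Expanding the semi inner product and transferring one copy of $L$ onto the first slot by the adjoint identity,
\[
\qipH{L^i u, L^{j-1} u}=-\ip{L^i u,(L^\top H+HL)L^{j-1} u}=-\ip{L^{i+1} u,H L^{j-1} u}-\ip{L^i u,H L^{j} u}.
\]
The two terms on the right are exactly $-\ipH{L^{i+1} u,L^{j-1} u}$ and $-\ipH{L^i u,L^j u}$, so rearranging gives $\ipH{L^i u,L^j u}=-\ipH{L^{i+1} u,L^{j-1} u}-\qipH{L^i u,L^{j-1} u}$, which is the asserted identity. Note this holds for every $j\geq i+1$; the point for the induction procedure is that each application lowers the index gap $j-i$ by one, so after finitely many steps it reaches $0$ or $1$ and the recursion terminates on the first two branches.

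The case $j=i+1$ is then just that terminal specialization. Setting $j=i+1$ in the identity just derived yields $\ipH{L^i u,L^{i+1} u}=-\ipH{L^{i+1} u,L^i u}-\qnmH{L^i u}^2$, and since $H$ is symmetric we have $\ipH{L^{i+1} u,L^i u}=\ipH{L^i u,L^{i+1} u}$, hence $2\ipH{L^i u,L^{i+1} u}=-\qnmH{L^i u}^2$, i.e.\ $\ipH{L^i u,L^{i+1} u}=-\frac12\qnmH{L^i u}^2$. Alternatively one can read this branch off directly by writing $-\frac12\qnmH{L^i u}^2=\frac12\ip{L^i u,(L^\top H+HL)L^i u}$ and observing, again by symmetry of $H$, that both summands on the right equal $\ipH{L^i u,L^{i+1} u}$.

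Every step here is an elementary algebraic manipulation, so I do not expect a genuine obstacle; the only thing to be careful with is the bookkeeping of the transpose and symmetry moves, namely ensuring that the ``integration by parts'' step transfers one power of $L$ from the $j$-slot to the $i$-slot with the correct sign and that the leftover term is precisely the semi inner product $\qipH{\cdot,\cdot}$ — which is the discrete counterpart of the boundary term produced when integrating by parts in the PDE setting.
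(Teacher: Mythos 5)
Your proof is correct and uses essentially the same elementary manipulation as the paper: transferring one power of $L$ via $\ip{v,L^\top H w}=\ip{Lv,Hw}$ so that the leftover term is exactly $\qipH{L^iu,L^{j-1}u}$. The only cosmetic difference is that you expand the right-hand side and rearrange (and obtain the $j=i+1$ branch as a specialization using symmetry of $\ipH{\cdot,\cdot}$), whereas the paper works from the left-hand side and verifies the $j=i+1$ case by direct symmetrization; both are the same computation.
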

\begin{proof}
	The case $j = i$ can be justified with the definition of $\nmH{\cdot}$. When $j = i+1$, 
	\begin{equation}
	\begin{aligned}
	\ipH{L^i u , L^{i+1} u} &= \hf \ip{L^i u, (HL) L^i u} + \hf \ip{L^\top H L^i u, L^i u} \\
	&= \hf\ip{L^i u, (L^\top H + HL) L^i u} = -\frac{1}{2}\qnmH{L^i u}^2.
	\end{aligned}
	\end{equation}
	When $j > i+1$, 
	\begin{equation}
	\begin{aligned}
	\ipH{L^i u , L^{j} u} &= \ip{L^i u, (HL) L^{j-1} u} \\
	&=  - \ip{L^i u, L^\top H L^{j-1} u} 
	+ \ip{L^i u, (L^\top H + HL) L^{j-1} u}\\
	&= -\ipH{L^{i+1} u, L^{j-1} u } - \qipH{L^iu,L^{j-1}u}.
	\end{aligned}
	\end{equation}
\end{proof}

In the context of approximating the spatial derivative $\partial_x$ for periodic functions with $L$, \cref{prop-routine} is the discrete version of integration by parts. Since $L$ may not preserve the exact anti-symmetry of $\partial_x$, namely $L^\top H + HL \neq 0$, an extra term $\qipH{L^iu,L^{j-1}u}$ is produced. Furthermore, $-\frac{1}{2}\qnmH{L^iu}^2$ is usually the numerical dissipation from the spatial discretization. In particular, $\qnmH{\cdot}$ is the jump semi-norm in the DG method.

Furthermore, one can repeat the induction procedure to obtain the following expansion.
\begin{corollary}\label{cor-induction}
	For $j\geq i$, 
	\begin{equation}\label{eq-prop-2}
	\ipH{L^iu,L^ju} =
	\zeta_{i,j} - \sum_{k=0}^{\lfloor{\frac{j-i}{2}}\rfloor -1}(-1)^{k}\qipH{L^{i+k}u,L^{j-1-k}u},
	\end{equation}
	where
	\begin{equation}
	\zeta_{i,j} = \left\{
	\begin{array}{cc}
	(-1)^{\frac{j-i+1}{2}}\frac{1}{2}\qnmH{L^{\frac{i+j-1}{2}}u}^2,&\qquad i+j \text{ odd},\\ (-1)^{\frac{j-i}{2}}\nmH{L^{\frac{i+j}{2}}u}^2,&\qquad i+j \text{ even}.\\
	\end{array}\right.
	\end{equation}
\end{corollary}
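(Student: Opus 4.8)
The plan is to prove \cref{cor-induction} by induction on the gap $d = j - i \geq 0$, with \cref{prop-routine} supplying every step. The base cases $d=0$ and $d=1$ are literally the first two branches of \eqref{eq-prop-1}: for $d=0$ the asserted identity reads $\ipH{L^iu,L^iu}=\nmH{L^iu}^2$ with empty sum (here $i+j$ is even and $\zeta_{i,i}=\nmH{L^iu}^2$), and for $d=1$ it reads $\ipH{L^iu,L^{i+1}u}=-\frac{1}{2}\qnmH{L^iu}^2$ with empty sum ($i+j$ odd, $\zeta_{i,i+1}=-\frac{1}{2}\qnmH{L^iu}^2$). Both are immediate from \cref{prop-routine}.

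For the inductive step, let $d=j-i\geq 2$ and assume the identity for all pairs of smaller gap. The third branch of \eqref{eq-prop-1} gives
\begin{equation*}
\ipH{L^iu,L^ju}=-\ipH{L^{i+1}u,L^{j-1}u}-\qipH{L^iu,L^{j-1}u}.
\end{equation*}
The pair $(i+1,j-1)$ has gap $d-2$, so the inductive hypothesis expresses $\ipH{L^{i+1}u,L^{j-1}u}$ as $\zeta_{i+1,j-1}$ minus an alternating sum of $\qipH{\cdot,\cdot}$ terms with index $k=0,\dots,\lfloor(d-2)/2\rfloor-1$. Substituting, I would check three points. First, $-\zeta_{i+1,j-1}=\zeta_{i,j}$: indeed $i+j$ and $(i+1)+(j-1)$ have the same parity, the central index $\frac{i+j}{2}$ or $\frac{i+j-1}{2}$ inside $\zeta$ is unchanged, and the exponent of $-1$ in $\zeta_{i+1,j-1}$ is exactly one less than that in $\zeta_{i,j}$. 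Second, since $\lfloor(d-2)/2\rfloor-1=\lfloor d/2\rfloor-2$, re-indexing the carried-over alternating sum by $k\mapsto k+1$ turns it, together with the intervening sign changes, into $-\sum_{k=1}^{\lfloor d/2\rfloor-1}(-1)^k\qipH{L^{i+k}u,L^{j-1-k}u}$. Third, the leftover term $-\qipH{L^iu,L^{j-1}u}$ is precisely the $k=0$ summand of that expression, so the sum closes up to $\lfloor d/2\rfloor-1=\lfloor(j-i)/2\rfloor-1$. Collecting these gives exactly \eqref{eq-prop-2} for $(i,j)$, completing the induction.

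There is no genuine obstacle here --- the whole argument is bookkeeping of indices and alternating signs. The only place demanding a little care is matching the re-indexed inner sum with the detached term $-\qipH{L^iu,L^{j-1}u}$ and verifying that the upper limit lands on $\lfloor d/2\rfloor-1$ in both parity classes: for even $d$ the recursion bottoms out at $\ipH{L^{(i+j)/2}u,L^{(i+j)/2}u}=\nmH{L^{(i+j)/2}u}^2$, and for odd $d$ at $\ipH{L^{(i+j-1)/2}u,L^{(i+j+1)/2}u}=-\frac{1}{2}\qnmH{L^{(i+j-1)/2}u}^2$. Alternatively, one may dispense with the formal induction and simply apply the third branch of \eqref{eq-prop-1} repeatedly --- each application sends $(i,j)\mapsto(i+1,j-1)$, flips the overall sign, and emits one $\qipH{\cdot,\cdot}$ term --- until the two superscripts either coincide or differ by one, at which point the first or second branch of \eqref{eq-prop-1} supplies an $\nmH{\cdot}^2$ or $-\frac{1}{2}\qnmH{\cdot}^2$ term; the telescoped alternating signs then reproduce both the prefactor of $\zeta_{i,j}$ and the $(-1)^k$ weights inside the sum.
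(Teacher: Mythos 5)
Your proof is correct, and it follows exactly the route the paper intends: the paper states \cref{cor-induction} with only the remark that one should ``repeat the induction procedure'' of \cref{prop-routine}, and your induction on the gap $j-i$ (with the parity and sign bookkeeping checked explicitly) is precisely the formalization of that repeated integration by parts. No discrepancies found.
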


Based on \cref{cor-induction}, we have the following energy equality.

\begin{lemma}[Energy equality]\label{lem-exps}
	Given $H$ and $R_s = \sum_{k=0}^s \alpha_k (\tau L)^k$ with $\alpha_0 = 1$. There exists a unique set of coefficients $\{\beta_k\}_{k=0}^s\cup \{\gamma_{i,j}\}_{i,j=0}^{s-1}$, such that for all $u$ and $L$ satisfying $L^\top H + H L \leq 0$, 
	\begin{equation}\label{eq-exps}
	\nmH{R_s u}^2 = \sum_{k = 0}^s\beta_k \tau^{2k} \nmH{L^k u}^2 + \sum_{i,j = 0}^{s-1}\gamma_{i,j} \tau^{i+j+1}\qipH{L^i u, L^j u}, \qquad \gamma_{i,j} = \gamma_{j,i}.
	\end{equation}
\end{lemma}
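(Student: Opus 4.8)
The plan is to start from the naive expansion \eqref{eq-exps1},
\[
\nmH{R_s u}^2 = \sum_{i,j=0}^s \alpha_i\alpha_j\,\tau^{i+j}\,\ipH{L^i u, L^j u},
\]
and rewrite each inner product $\ipH{L^i u, L^j u}$ using \cref{cor-induction}. Without loss of generality take $j\geq i$ in each term (the term with $i,j$ swapped contributes the complex-conjugate, i.e.\ identical, real quantity, so pairs $(i,j)$ and $(j,i)$ can be combined). Substituting \eqref{eq-prop-2} replaces every $\ipH{L^i u, L^j u}$ by a $\zeta_{i,j}$ piece, which is $\pm\tfrac12\qnmH{L^k u}^2$ or $\pm\nmH{L^k u}^2$ for a single index $k$ depending on the parity of $i+j$, plus a finite alternating sum of semi-inner-product terms $\qipH{L^{i+k}u, L^{j-1-k}u}$. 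Collecting the $\nmH{L^k u}^2$ contributions over all $(i,j)$ with $i+j=2k$ defines $\beta_k$, and collecting all the $\qipH{\cdot,\cdot}$ contributions (both those coming from $\zeta_{i,j}$ when $i+j$ is odd and those from the alternating sums) and symmetrizing in the two indices defines $\gamma_{i,j}=\gamma_{j,i}$. This establishes \emph{existence} of the coefficients and, as a byproduct, gives explicit (if unwieldy) formulas for them in terms of the $\alpha_k$.

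For the index bookkeeping I would check the ranges carefully. A term $\ipH{L^i u, L^j u}$ with $0\le i\le j\le s$ produces, via \eqref{eq-prop-2}, semi-inner products $\qipH{L^a u, L^b u}$ with $a+b = i+j-1$ and $a,b$ running between $i$ and $j-1$; since $i+j\le 2s$ we get $a+b\le 2s-1$, and since at least one of $a,b$ is at most $\lfloor(i+j-1)/2\rfloor\le s-1$ while the other is at most $j-1\le s-1$, both indices lie in $\{0,\dots,s-1\}$, matching the claimed range $i,j=0,\dots,s-1$ in \eqref{eq-exps}. Similarly the $\nmH{L^k u}^2$ terms have $k=(i+j)/2\le s$, matching $k=0,\dots,s$. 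The leading coefficient consistency $\beta_0 = \alpha_0^2 = 1$ drops out automatically since the only term with $i=j=0$ is $\nmH{u}^2$.

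For \emph{uniqueness}, the cleanest argument is to observe that the identity \eqref{eq-exps} is required to hold for \emph{all} $u$ and \emph{all} $L$ with $L^\top H + HL\le 0$, so I would exhibit enough test pairs $(u,L)$ to force the coefficients. The natural move is to work in the ``free'' setting: treat the quantities $\nmH{L^k u}^2$ and $\qipH{L^i u, L^j u}$ as if they were independent, which amounts to showing that for a generic choice of $L$ and $u$ one cannot have a nontrivial linear relation among $\{\nmH{L^k u}^2\}_{k=0}^s$ and $\{\qipH{L^i u, L^j u}\}_{0\le i\le j\le s-1}$ with $\tau$-graded coefficients. Concretely, if two representations $\{\beta_k,\gamma_{i,j}\}$ and $\{\beta_k',\gamma_{i,j}'\}$ both satisfy \eqref{eq-exps}, their difference gives $0 = \sum_k (\beta_k-\beta_k')\tau^{2k}\nmH{L^k u}^2 + \sum_{i,j}(\gamma_{i,j}-\gamma_{i,j}')\tau^{i+j+1}\qipH{L^i u,L^j u}$ for all $u$, all admissible $L$, and all small $\tau$; matching powers of $\tau$ separates this by total degree, and then one needs that for each fixed degree the finitely many quadratic forms $u\mapsto \nmH{L^k u}^2$ (even degree) and $u\mapsto \qipH{L^i u, L^j u}$ (odd degree) are linearly independent as functions of $u$ for a suitable choice of $L$ and $H$. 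I expect this linear-independence step to be the main obstacle: it is where one must actually use the freedom in choosing $L$ (e.g.\ a skew-symmetric part making $\qnmH{\cdot}$ nondegenerate plus a generic perturbation, or simply a companion-type matrix so that $u, Lu, L^2u,\dots$ are linearly independent) rather than just formal manipulation. One efficient route: pick $H=I$ and $L$ with a large skew-symmetric part $S$ and a small symmetric negative part $-\veps P$; then to leading order $\qipH{L^i u, L^j u}\approx \veps\,\ip{S^i u, P S^j u}$ (plus symmetrization) and $\nmH{L^k u}^2\approx \nm{S^k u}^2$, and choosing $S$ a generic orthogonal-spectrum matrix together with generic $u$ and $P$ one reads off that these forms are independent, killing the difference coefficients one graded piece at a time. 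I would present existence via the explicit substitution and uniqueness via this test-function argument, keeping the parity case-split ($i+j$ even vs.\ odd) organized in a short table as in \cref{cor-induction}.
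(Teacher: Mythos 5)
Your existence argument coincides with the paper's: substitute \cref{cor-induction} into \eqref{eq-exps1} and collect terms, and your index bookkeeping (that all produced indices land in the claimed ranges) is correct. The divergence, and the genuine gap, is in the uniqueness half. After separating total degrees (which is legitimate, since rescaling $L\mapsto cL$ preserves admissibility), you must show that within each fixed degree the forms $\nmH{L^ku}^2$ and the symmetrized $\qipH{L^iu,L^ju}$ with $i+j$ fixed admit no nontrivial linear relation valid for all admissible $(u,L)$. Your ansatz $H=I$, $L=S-\veps P$ does isolate the norm term, since $\qip{L^iu,L^ju}=2\veps\ip{L^iu,PL^ju}=O(\veps)$ while $\nm{L^ku}^2\to\nm{S^ku}^2$; but the remaining claim --- that for ``generic'' skew $S$, symmetric $P\geq 0$ and $u$ the quantities $\ip{S^iu,PS^ju}+\ip{S^ju,PS^iu}$ with $i+j=m$ fixed are linearly independent --- is precisely the crux, and it cannot simply be ``read off''. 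Indeed, with the most natural instantiation $S=\left(\begin{smallmatrix}0&-1\\1&0\end{smallmatrix}\right)$ and $u=(1,0)^\top$, the pairs $(i,j)=(0,4)$ and $(2,2)$ produce the identical functional $P_{11}$ of $P$ (because $S^4=I$ and $S^2=-I$), so a single rotation frequency does not suffice; one would need several distinct frequencies, cross-block entries of $P$, and a Vandermonde-type argument in the frequencies. The step you yourself flagged as ``the main obstacle'' is therefore left unresolved, and the sketch as given would fail without substantial additional work.

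The paper avoids this difficulty with two moves worth noting. First, it never proves independence of the mixed family directly: since by \cref{cor-induction} each $\tau^{i+j}\ipH{L^iu,L^ju}$ is a linear combination of the target terms and the two families $\{\tau^{i+j}\ip{L^iu,L^ju}\}_{0\leq i\leq j\leq s}$ and $\{\tau^{2k}\nm{L^ku}^2\}_{k=0}^s\cup\{\tau^{i+j}\qip{L^iu,L^ju}\}_{0\leq i\leq j\leq s-1}$ have the same cardinality, linear independence of the former (plain inner products) transfers to the latter. Second, for the plain inner products a single $2\times 2$ test matrix suffices: taking $L$ to be the rotation by $\theta\in(\frac{\pi}{2},\frac{3\pi}{2})$ (so $L^\top+L\leq 0$) and $u=(1,0)^\top$, orthogonality gives $\ip{L^iu,L^ju}=\cos((j-i)\theta)$, and within a fixed degree $m=i+j$ the differences $m-2k$ are distinct nonnegative integers, so linear independence of $\{\cos(n\theta)\}$ on an interval, together with matching powers of $\tau$, finishes the proof with no perturbation or genericity argument. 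If you reorganize your uniqueness proof along these lines, the gap disappears.
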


\begin{proof}
	The existence of such expansion can be justified by \eqref{eq-exps1} and \cref{cor-induction}. It remains to show that the set of coefficients is unique. Each term in the summation should be considered as a polynomial of $\tau$, elements in $L$ and elements in $u$. We are going to show these polynomials are linearly independent.
	
	(i) It suffices to analyze the case $H$ being the identity matrix, otherwise we can consider $\tilde{L} = \sqrt{H}L\sqrt{H}^{-1}$ and $\tilde{u} = \sqrt{H} u$ instead. 
	
	(ii) It suffices to show $\{\tau^{i+j}\ip{L^iu, L^j u}\}_{0\leq i\leq j \leq s}$ are linearly independent. Since elements in the set can be expressed as linear combinations of $\{\tau^{2k}\nm{L^ku}^2\}_{k=0}^s\cup \{\tau^{i+j}\qip{L^i u,L^j u}\}_{0\leq i\leq j\leq s-1 }$ due to \cref{cor-induction}. Since the two sets have the same cardinality, the linear independence of the previous set implies that of the latter one. 
	
	Particularly, we take $L = \left(\begin{matrix}
	\cos \theta & -\sin \theta\\
	\sin \theta & \cos \theta
	\end{matrix}\right)$ with $\theta\in(\frac{\pi}{2}, \frac{3\pi}{2})$ (hence $L^\top + L\leq 0$) and $u = \left(\begin{matrix}
	1\\0\end{matrix}\right)$. Noting that $L$ is a rotation matrix, which is orthogonal, one can obtain
	\begin{equation}
	\begin{aligned}
	0 = \sum_{0\leq i\leq j \leq s} \alpha_{i,j} \tau^{i+j}\ip{L^iu,L^j u} = \sum_{m = 0}^{2s}\left(\sum_{k = \max\{0,m-s\}}^{\lfloor\frac{m}{2}\rfloor}\alpha_{k,m-k} \ip{L^k u, L^{m-k}u}\right)\tau^m\\
	=  \sum_{m = 0}^{2s}\sum_{k = \max\{0,m-s\}}^{\lfloor\frac{m}{2}\rfloor}\alpha_{k,m-k} \cos((m-2k)\theta)\tau^m.
	\end{aligned}
	\end{equation}
	Note $\{m -2k\}_{k = \max\{0,m-s\}}^{\lfloor\frac{m}{2}\rfloor}$ are distinct non-negative integers. Due to linear independence of $\cos((m-2k)\theta)\tau^m$, $\{\alpha_{k,m-k}\}_{k=\max\{0,m-s\}}^{\lfloor\frac{m}{2}\rfloor}$ are all zeros for each $m$. Hence $\{\tau^{i+j}\ip{L^iu, L^j u}\}_{0\leq i\leq j \leq s}$ are linearly independent.
\end{proof}
\begin{remark}
	The uniqueness is not used in the framework. But it facilitates our analysis in \cref{sec-4}. Note that the uniqueness is nontrivial. For example, if we restrict ourselves to a small subset $\{L: L^\top H + HL = -\eta L^{\top} H L, 0>\eta \in \mathbb{R} \}$, then one can certainly obtain different linear combinations. 
\end{remark}

To facilitate our discussion, we introduce the following definitions.
\begin{definition}
	The \underline{leading index} of $R_s$, denoted as $k^*$, is the positive integer such that $\beta_{k^*} \neq 0$ and $\beta_k < 0 $ for all $1\leq k < k^*$. The coefficient $\beta_{k^*}$ is called the \underline{leading coefficient}. The $k^*$-th order principal submatrix $\Gamma^{*} = (\gamma_{i,j})_{0\leq i,j \leq {k^*-1}}$ is called the \underline{leading submatrix}.
\end{definition}
Note that $k^*$ is well-defined since $\beta_{s} = \alpha_s^2 \neq 0$, which implies $k^*\leq s$. For small $\tau \nmH{L}$, $\beta_{k^*}\tau^{2k^*}\nmH{L^k u}^2$ and $\sum_{i,j \geq 0}^{k^*-1}\gamma_{i,j} \tau^{i+j+1}\qipH{L^iu,L^ju}$ become dominant terms in the energy equality. Hence the strong stability would be closely related with the negativity of $\beta_{k^*}$ and $\Gamma^*$. In particular, we have the following necessary condition and sufficient condition. 

\begin{theorem}[Necessary condition]\label{thm-nec}
	The method is not strongly stable if $\beta_{k^*}>0$. More specifically, if $\beta_{k^*} > 0 $, then there exists a constant $\lambda$, such that $\nmH{R_s}> 1$ if $0< \tau \nmH{L}\leq \lambda$ and $L^\top H + H L = 0$. 
\end{theorem}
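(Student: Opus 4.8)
The plan is to isolate the lowest-order term in $\tau$ where the energy equality \eqref{eq-exps} differs from $\nmH{u}^2$ and show that, under the extra hypothesis $L^\top H + HL = 0$, this term is positive for a suitable choice of $u$, forcing $\nmH{R_s u} > \nmH{u}$ for small $\tau$. First I would observe that when $L^\top H + HL = 0$ all the semi inner-product terms $\qipH{L^i u, L^j u}$ vanish identically, so \eqref{eq-exps} collapses to
\begin{equation}\label{eq-nec-collapse}
\nmH{R_s u}^2 = \nmH{u}^2 + \sum_{k=1}^s \beta_k \tau^{2k} \nmH{L^k u}^2,
\end{equation}
using $\beta_0 = \alpha_0^2 = 1$. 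Thus the sign of $\nmH{R_s u}^2 - \nmH{u}^2$ for small $\tau$ is governed by the first nonzero $\beta_k$, which is exactly $\beta_{k^*}$ by the definition of the leading index (recall $\beta_k < 0$ for $1 \le k < k^*$).

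Next I would produce a concrete $L$ and $u$. Since the claim needs a matrix with $L^\top H + HL = 0$ and with $L^{k^*} u \neq 0$, I would mimic the construction already used in the proof of \cref{lem-exps}: reduce to $H = I$ via $\tilde L = \sqrt{H} L \sqrt{H}^{-1}$, $\tilde u = \sqrt{H} u$ (which preserves $\nmH{\cdot}$-norms and the relation $\tilde L^\top + \tilde L = 0$), and then take $L$ to be a real skew-symmetric matrix — e.g. a $2\times 2$ rotation generator $\left(\begin{matrix} 0 & -1 \\ 1 & 0 \end{matrix}\right)$ — together with $u = \left(\begin{matrix} 1 \\ 0 \end{matrix}\right)$. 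For this $L$, $\nmH{L^k u} = \nm{u}$ for every $k$, so \eqref{eq-nec-collapse} becomes $\nmH{R_s u}^2 = \left(1 + \sum_{k=1}^s \beta_k \tau^{2k}\right)\nm{u}^2$. Actually, to make the argument cleanly "spectral", it is even simpler to note that on this $2\times2$ block $\tau L$ has eigenvalues $\pm i\tau$, so $R_s$ has eigenvalue $r(i\tau) := \sum_k \alpha_k (i\tau)^k$ and $\nmH{R_s u}^2 = |r(i\tau)|^2 \nm{u}^2 = \sum_k \beta_k \tau^{2k} \cdot \nm{u}^2$; either route gives the same polynomial $P(\tau) := \sum_{k=0}^s \beta_k \tau^{2k}$ with $P(0) = 1$.

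Then the conclusion is a one-line analysis of $P$. Write $P(\tau) = 1 + \sum_{k=1}^{k^*-1}\beta_k \tau^{2k} + \beta_{k^*}\tau^{2k^*} + O(\tau^{2k^*+2})$. If $k^* = 1$ this is $P(\tau) = 1 + \beta_1 \tau^2 + O(\tau^4)$ with $\beta_1 = \beta_{k^*} > 0$, so $P(\tau) > 1$ for all sufficiently small $\tau > 0$. If $k^* > 1$, the intermediate terms $\beta_k \tau^{2k}$ with $1 \le k < k^*$ are negative, so I cannot simply drop them; instead I use that each such term is $O(\tau^{2k})$ with $2k < 2k^*$ — wait, that is the wrong direction. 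The correct observation: the definition forces $\beta_k < 0$ for $k < k^*$, which pushes $P$ down, so this case needs a slightly different matrix. The fix is to choose $L$ so that $\nmH{L^k u} = 0$ for $1 \le k < k^*$ but $\nmH{L^{k^*} u} \neq 0$; e.g. take $L$ nilpotent-like of the right index but skew — however a nonzero real skew-symmetric matrix is never nilpotent, so this is the genuine obstacle. I would resolve it by taking a block-diagonal skew-symmetric $L = \bigoplus_\ell \omega_\ell J$ (rotation generators with distinct frequencies $\omega_\ell$) and $u$ spreading across blocks, giving $\nmH{L^k u}^2 = \sum_\ell \omega_\ell^{2k} c_\ell$; this realizes $P$ with weights that can be made a Vandermonde combination, and since $P \equiv \sum \beta_k \tau^{2k}$ here too, the only thing that matters is still the polynomial $P$ itself. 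So the real content reduces to: \emph{if $\beta_{k^*} > 0$ while $\beta_k < 0$ for $k < k^*$, is $P(\tau) > 1$ for some small $\tau > 0$?} — and the answer is obtained by noting $P(\tau) - 1 = \tau^{2}\left(\beta_1 + \dots + \beta_{k^*}\tau^{2k^*-2} + \dots\right)$ need not be positive near $0$, so one must instead test at a scale $\tau \sim $ where $\beta_{k^*}\tau^{2k^*}$ dominates the earlier terms; but those earlier negative terms are larger, not smaller, at every small scale. Hence the honest statement is that the clean collapse \eqref{eq-nec-collapse} only yields the result immediately when $k^* = 1$, and for $k^* > 1$ one must exploit that \cref{thm-nec} is stated with the constraint $L^\top H + HL = 0$ chosen adversarially: pick $u$ in the kernel of the lower powers. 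Concretely, choose $L$ real skew with $L u = 0$ unavailable, so instead choose $u$ not an eigenvector but arrange via the block structure that the coefficients multiplying $\tau^{2}, \dots, \tau^{2(k^*-1)}$ vanish — which is exactly possible because $\beta_k$ for $k<k^*$ multiply $\nmH{L^k u}^2$, and we are free to kill $\nmH{L u}^2, \dots, \nmH{L^{k^*-1} u}^2$ while keeping $\nmH{L^{k^*} u}^2 \neq 0$? No — for a single rotation block all these norms are equal.

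I therefore expect the \textbf{main obstacle} to be precisely this: exhibiting, for $k^* > 1$, a skew-adjoint $L$ and vector $u$ with $\nmH{L^k u}$ controlled so the earlier (negative) $\beta_k$ terms do not swamp the positive $\beta_{k^*}\tau^{2k^*}$. The resolution I would pursue: since the claim only asserts $\nmH{R_s} > 1$ (the operator norm, not $\nmH{R_s u} > \nmH{u}$ for a fixed $u$), it is enough to find \emph{any} unit vector on which $R_s$ expands, and for a direct-sum of rotation blocks $\bigoplus_\ell \omega_\ell J$ the operator norm of $R_s$ is $\max_\ell |r(i\omega_\ell \tau)|$, so $\nmH{R_s} = \max_{\omega} |r(i\omega\tau)|$ over a continuum of frequencies $\omega \tau \in [0,\lambda]$. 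Then $\nmH{R_s} > 1$ as soon as $|r(i\xi)| > 1$ for some $\xi \in (0,\lambda]$, i.e. as soon as $P(\xi) > 1$ for some small $\xi$ — but here $\xi$ need not be infinitesimally small relative to anything, we just need $\lambda$ small enough that no \emph{other} obstruction appears, and existence of such $\xi$ follows because $\beta_{k^*}$ is the first nonzero coefficient beyond a string of negatives only if... Actually $P(\xi) = 1$ has $\xi = 0$ as a root of multiplicity $2$ (since $\beta_1 \neq 0$ when $k^*>1$... no, $\beta_1$ could be the last negative one). The cleanest correct route, which I would write up: $|r(i\xi)|^2 = P(\xi)$ is an even polynomial with $P(0)=1$; if its lowest nonconstant term has \emph{positive} coefficient then $P>1$ near $0$ — this is the case $k^*=1$ handled. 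If instead $\beta_1<0$, then $k^*\geq 2$ and near $0$ we have $P<1$; but strong stability would require $P(\xi)\leq 1$ for \emph{all} $\xi\in[0,\lambda]$, and I claim $\beta_{k^*}>0$ with all intermediate $\beta_k<0$ is impossible to reconcile with... hmm, it is not obviously impossible. At this point I would concede that the necessary condition as stated is genuinely about the sign of the leading coefficient and is proved by the author via a more careful Taylor/scaling argument on $P(\tau)$ near $\tau = 0$, choosing $\tau$ so that the $\tau^{2k^*}$ term beats the sum of the (bounded number of) lower terms after an appropriate rescaling of the problem by replacing $L$ with $cL$ — and since $\tau\nmH{L}$ is the only scale-invariant quantity, one exploits that the hypothesis is imposed for \emph{all} $0<\tau\nmH{L}\leq\lambda$, so it suffices to violate it at a single such pair, which the polynomial $P$ with $P'(0)=0$, $P(0)=1$ and first nonvanishing higher coefficient $\beta_{k^*}>0$ provides once one restricts to a subspace where the lower powers of $L$ act trivially — realized by a nilpotent \emph{perturbation}, i.e. taking $L = N + \epsilon S$ with $N$ a nilpotent Jordan block of index $k^*$, $S$ a correction making $L^\top H + HL = 0$, and $\epsilon \to 0$, so that $\nmH{L^k u} \to 0$ for $k \geq 1$ faster in $\epsilon$ for larger $k$... no. I will trust that the author's argument handles this degeneracy, and the skeleton above — collapse to \eqref{eq-nec-collapse}, reduce to the scalar polynomial $P(\tau)=\sum\beta_k\tau^{2k}$, and read off positivity from $\beta_{k^*}$ — is the correct backbone, with the delicate point being the construction of the extremal $(L,u)$ when $k^*>1$.
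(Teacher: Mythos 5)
Your proposal does not reach a proof: after the correct first step (with $L^\top H+HL=0$ every $\qipH{L^iu,L^ju}$ term in \cref{lem-exps} vanishes, so $\nmH{R_su}^2=\sum_{k\ge 0}\beta_k\tau^{2k}\nmH{L^ku}^2$), the remainder of the attempt is spent worrying about intermediate coefficients ``$\beta_k<0$ for $1\le k<k^*$'' and ends by conceding the case $k^*>1$ to the author. The root of the trouble is the reading of the leading-index definition. The definition the paper actually uses -- as its own proofs of \cref{thm-nec} and \cref{thm-suff} make plain by simply omitting the terms with $1\le k<k^*$, and as \cref{lem-key} and all the tables confirm -- is that $\beta_k$ \emph{vanishes} for $1\le k<k^*$, i.e.\ $k^*$ is the first index at which a nonzero coefficient appears (the ``$<$'' in the definition is evidently a misprint for ``$=$''). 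Your own discussion in fact discovers why the literal strict-inequality reading cannot be intended: with a genuinely negative $\beta_1$ and a single rotation block one has $|r(i\xi)|^2=1+\beta_1\xi^2+\cdots<1$ for small $\xi$, so the ``more specifically'' claim could not hold. With the correct reading, the obstacle you label the main difficulty simply does not exist, and the construction you abandon (one rotation block, $\nmH{L^ku}=\nmH{u}$ for all $k$, $P(\tau)=1+\beta_{k^*}\tau^{2k^*}+O(\tau^{2k^*+2})>1$ for small $\tau$) closes the argument for every $k^*$.

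Two further points of comparison with the paper. First, the statement asserts $\nmH{R_s}>1$ for \emph{every} nonzero $L$ with $L^\top H+HL=0$ and $\tau\nmH{L}\le\lambda$, not merely for one constructed example; the paper gets this directly from the energy identity, without any spectral decomposition, by bounding the tail via $\nmH{L^ku}\le\nmH{L}^{k-k^*}\nmH{L^{k^*}u}$, so that $\nmH{R_su}^2\ge\nmH{u}^2+\bigl(\beta_{k^*}-\sum_{k>k^*}|\beta_k|(\tau\nmH{L})^{2(k-k^*)}\bigr)\tau^{2k^*}\nmH{L^{k^*}u}^2$, and then choosing $\lambda$ so small that $\sum_{k>k^*}|\beta_k|\lambda^{2(k-k^*)}<\beta_{k^*}$ (taking $u$ maximizing $\nmH{L^{k^*}u}/\nmH{u}$, with $L^{k^*}\ne 0$ because a nonzero $H$-skew-adjoint matrix has a nonzero purely imaginary eigenvalue). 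Second, your spectral route would also have delivered the universal claim in two lines had you committed to it: for $H$-skew-adjoint $L$ one has $\nmH{R_s}\ge|r(i\tau\nmH{L})|$ and $|r(i\xi)|^2=\sum_k\beta_k\xi^{2k}$, whose first nonconstant term is $\beta_{k^*}\xi^{2k^*}>0$. As written, however, the proposal stops short of a proof and must be counted as containing a genuine gap.
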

\begin{proof}
	With $L^\top H + HL = 0$, the latter summation of terms $\qipH{L^iu,L^ju}$ in \eqref{eq-exps} is zero. Hence
	\begin{equation}
	\begin{aligned}
	\nmH{R_s u}^2 &= \nmH{u}^2 + \beta_{k^*}\tau^{2k^*}\nmH{L^{k^*}u}^2 + \sum_{k = {k^*+1}}^s\beta_k \tau^{2k} \nmH{L^ku}^2\\
	&\geq \nmH{u}^2 + \left(\beta_{k^*}- \sum_{k = {k^{*}+1}}^{s}|\beta_{k}| (\tau\nmH{L})^{2(k-k^*)}\right)
	\tau^{2k^*} \nmH{L^{k^*}u}^2.\\
	\end{aligned}
	\end{equation}
	Therefore, 
	\begin{equation}
	\nmH{R_s}^2\geq 1 + (\beta_{k^{*}}-\tilde{\beta})\tau^{2k}\nmH{L^{k^*}}^2>1,
	\end{equation}
	if $\tilde{\beta} = \sum_{k =k^*+1}^{s}|\beta_{k}|\lambda^{2(k-k^*)}<\beta_{k^*}$.
\end{proof}
\begin{theorem}[Sufficient condition]\label{thm-suff}
	If $\beta_{k^*} < 0 $ and $\Gamma^*$ is negative definite, then there exists a constant $\lambda$ such that $\nmH{R_s } \leq 1$ if $\tau \nmH{L} \leq \lambda$. 
\end{theorem}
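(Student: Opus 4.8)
The plan is to run an energy-absorption argument directly on the energy equality \eqref{eq-exps}. First I would pass to the rescaled iterates $v_k := \tau^k L^k u$, for which $\tau^{2k}\nmH{L^ku}^2 = \nmH{v_k}^2$, $\tau^{i+j+1}\qipH{L^iu,L^ju} = \tau\qipH{v_i,v_j}$, and $v_{k+1} = \tau L v_k$, so that $\nmH{v_{k+1}} \le \tau\nmH{L}\,\nmH{v_k} \le \lambda\nmH{v_k}$ and, by iteration, $\nmH{v_k} \le \lambda^{\,k-k^*}\nmH{v_{k^*}}$ for $k \ge k^*$. In these variables \eqref{eq-exps} reads $\nmH{R_su}^2 - \nmH{u}^2 = \sum_{k=1}^s \beta_k \nmH{v_k}^2 + \tau\sum_{i,j=0}^{s-1}\gamma_{i,j}\qipH{v_i,v_j}$, and the goal is to show the right-hand side is $\le 0$ once $\lambda$ is small. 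The one elementary estimate that will drive everything is that the semi-norm is controlled by the full norm after one step: from $\qnmH{v_k}^2 = -2\ipH{v_k,Lv_k}$ one gets $\tau\qnmH{v_k}^2 = -2\ipH{v_k,v_{k+1}} \le 2\nmH{v_k}\nmH{v_{k+1}} \le 2\lambda\nmH{v_k}^2$; combined with Cauchy--Schwarz $|\qipH{v_i,v_j}|\le\qnmH{v_i}\qnmH{v_j}$ this bounds every cross term.

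Next I would split the double sum into the leading block $\sum_{i,j=0}^{k^*-1}$ and the remainder (pairs with $\max\{i,j\}\ge k^*$). For the leading block I would factor the semi inner product as $\qipH{v,w} = \ip{Pv,Pw}$ with $P^\top P = -(L^\top H+HL)\succeq 0$, so that $\sum_{i,j<k^*}\gamma_{i,j}\qipH{v_i,v_j} = \ip{\mathbf{w},(\Gamma^*\otimes I)\mathbf{w}}$ with $\mathbf{w}=(Pv_0,\dots,Pv_{k^*-1})$; since $\Gamma^*$ is symmetric with $\lambda_{\max}(\Gamma^*) = -c_0 < 0$, this is $\le -c_0\sum_{i<k^*}\nm{Pv_i}^2 = -c_0\sum_{i<k^*}\qnmH{v_i}^2$, i.e.\ the leading block contributes $\le -c_0\,\tau\sum_{i=0}^{k^*-1}\qnmH{v_i}^2$. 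For the $\beta$-part, the definition of $k^*$ gives $\beta_1,\dots,\beta_{k^*-1}\le0$ and the hypothesis gives $\beta_{k^*}<0$, so $\sum_{k=1}^{k^*}\beta_k\nmH{v_k}^2 \le \beta_{k^*}\nmH{v_{k^*}}^2$, whereas $\sum_{k>k^*}\beta_k\nmH{v_k}^2 \le \tilde{\beta}(\lambda)\nmH{v_{k^*}}^2$ with $\tilde{\beta}(\lambda):=\sum_{k>k^*}|\beta_k|\lambda^{2(k-k^*)}\to0$.

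The hard part will be the remainder $\qipH$-terms, since a priori each of them is only $O(\lambda^{k^*+1})$ relative to $\nmH{u}^2$, which need not be small compared with $\nmH{v_{k^*}}^2$ (and the latter may vanish). For a remainder pair, say with $i\ge k^*$, I would split with a weight: $\tau|\qipH{v_i,v_j}| \le \tau\qnmH{v_i}\qnmH{v_j} \le \tfrac{\delta}{2}\tau\qnmH{v_i}^2 + \tfrac{1}{2\delta}\tau\qnmH{v_j}^2$. The first half is $\le \delta\lambda^{1+2(i-k^*)}\nmH{v_{k^*}}^2 \le \delta\lambda\,\nmH{v_{k^*}}^2$; the second half is $\le\lambda\nmH{v_{k^*}}^2$ when $j\ge k^*$, and when $j<k^*$ it is one of the terms $\tau\qnmH{v_j}^2$ already present with a negative coefficient in the leading block, so it gets absorbed there. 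Since there are at most $s^2$ remainder pairs, the total coefficient multiplying any $\tau\qnmH{v_j}^2$ with $j<k^*$ is $\le C_2/\delta$ for a constant $C_2$ depending only on $s$ and $\{\gamma_{i,j}\}$. Putting all bounds together gives $\nmH{R_su}^2 - \nmH{u}^2 \le \big(\beta_{k^*}+\tilde{\beta}(\lambda)+C_1\delta\lambda\big)\nmH{v_{k^*}}^2 + \big(\tfrac{C_2}{\delta}-c_0\big)\tau\sum_{i=0}^{k^*-1}\qnmH{v_i}^2$, with $C_1$ depending only on $s$ and $\{\gamma_{i,j}\}$.

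To finish I would fix $\delta$ large enough that $C_2/\delta<c_0$, making the second bracket negative, and only then fix $\lambda$ small enough that $\tilde{\beta}(\lambda)+C_1\delta\lambda<|\beta_{k^*}|$, making the first bracket negative; this yields $\nmH{R_su}^2\le\nmH{u}^2$ for all $u$, hence $\nmH{R_s}\le1$ whenever $\tau\nmH{L}\le\lambda$. The subtlety to be careful about throughout is exactly this ordering of quantifiers --- $\delta$ is chosen first, depending only on the (fixed) scheme coefficients and on $c_0=-\lambda_{\max}(\Gamma^*)$, and only afterwards is $\lambda$ shrunk --- together with the bookkeeping that routes every straddling remainder term (with $i\ge k^*>j$) into the $\Gamma^*$-generated negative semi-norm reservoir rather than attempting to dominate it by the possibly-tiny $\nmH{v_{k^*}}^2$.
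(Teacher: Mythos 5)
Your proof is correct and follows essentially the same route as the paper's: extract a negative reservoir $-c_0\,\tau\sum_{k<k^*}\qnmH{L^ku}^2$ from the negative definite leading block, bound the $\beta$-tail and every cross term with $\max\{i,j\}\geq k^*$ in terms of $\nmH{L^{k^*}u}^2$ via $\tau\qnmH{\cdot}^2\leq 2\lambda\nmH{\cdot}^2$, Cauchy--Schwarz and a weighted Young inequality, and fix the weight before shrinking $\lambda$. The only cosmetic differences are your rescaling $v_k=\tau^kL^ku$ and that you prove the leading-block estimate directly through the factorization $\qipH{v,w}=\ip{Pv,Pw}$ with $\lambda_{\max}(\Gamma^*\otimes I)=\lambda_{\max}(\Gamma^*)$, where the paper instead shifts $\Gamma^*$ by $\veps I$ and cites Lemma 2.3 of \cite{sun2017rk4}.
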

\begin{proof}
	Let $-\veps$ to be the largest eigenvalue of $\Gamma^*$. Then $\Gamma^* +\veps I$ is negative semi-definite. From Lemma 2.3 in \cite{sun2017rk4}, $\sum_{i,j = 0}^{k^*-1}(\gamma_{i,j} + \veps \delta_{i,j}) \tau^{i+j+1}\qipH{L^iu,L^ju}\leq 0$, where $\delta_{i,j}$ is the Kronecker delta function. Hence
	\begin{equation}
	\begin{aligned}
	\nmH{R_s u}^2 \leq& \nmH{u}^2 + \beta_{k^*}\tau^{2k^*} \nmH{L^{k^*} u}^2 + \sum_{k = k^*+1}^s\beta_k \tau^{2k} \nmH{L^k u}^2\\
	&- \veps\sum_{k = 0}^{k^*-1}\tau^{2k+1}\qnmH{L^ku}^2 + \sum_{k^*\leq \max\{i, j\} \leq s-1 }\gamma_{i,j} \tau^{i+j+1}\qipH{L^i u, L^j u}.
	\end{aligned}
	\end{equation}
	Note that $\nmH{L^k u} \leq \nmH{L}^{k-k^*}\nmH{L^{k^*}u}$ and $\tau \nmH{L}\leq \lambda$. Hence we have
	\begin{equation}
	\sum_{k = k^*+1}^s\beta_k \tau^{2k} \nmH{L^k u}^2\leq  \left(\sum_{k = k^*+1}^s|\beta_k|\lambda^{2(k-k^*)}\right)\tau^{2k^*} \nmH{L^{k^*} u}^2.
	\end{equation}
	Using the fact
	\begin{equation}
	\qipH{L^iu,L^ju} \leq \qnmH{L^iu}\qnmH{L^ju}, \quad 
	\tau\qnmH{L^ju}^2 \leq 2\lambda \nmH{L^j u}^2\leq 2\lambda \nmH{L}^{2(j-k^*)}\nmH{L^{k^*}u}^2,
	\end{equation}
	together with the arithmetic-geometric mean inequality, one can obtain
	\begin{equation}
	\tau^{i+j+1}\qipH{L^iu,L^ju}\leq
	\left\{
	\begin{matrix}
	\frac{\veps\tau^{2i+1}}{2\tilde{\beta}}\qnmH{L^iu}^2 + \frac{\tilde{\beta}\lambda^{2(j-k^*)+1}}{\veps}\tau^{2k^*}\nmH{L^{k^*}u}^2, &\quad i< k^*, j \geq k^*,\\
	2\lambda^{i+j+1-2k^*}\tau^{2k^*}\nmH{L^{k^*}u}^2,& \quad  i, j \geq k^*.\\
	\end{matrix}\right.
	\end{equation}
	Therefore,
	\begin{equation}
	\begin{aligned}
	\nmH{R_s u}^2 \leq&  \nmH{u}^2 + \left(\beta_{k^*}+\sum_{k = k^*+1}^s|\beta_k|\lambda^{2(k-k^*)} +2\sum_{i,j\geq k^*}^s |\gamma_{i,j}|\lambda^{i+j+1-2k^*}\right.\\
	&\left. + \frac{2\tilde{\beta}}{\veps}\sum_{j= k^*}^s\sum_{i=1}^{k^*-1} |\gamma_{i,j}|\lambda^{2(j-k^*)+1}\right)\tau^{2k^*} \nmH{L^{k^*} u}^2  \\
	& -\veps \sum_{k =0}^{k^*-1}\big(1-\frac{\sum_{j = k^*}^s |\gamma_{k,j}|}{\tilde{\beta}}\big)\tau^{2k+1}\qnmH{L^k u}^2.\\
	\end{aligned}
	\end{equation}
	It suffices to take $\tilde{\beta} = \max\{\sum_{j = k^*}^s |\gamma_{k,j}|\}_{k = 0}^{k^*-1}$ and then choose $\lambda$ sufficiently small so that the second coefficient on the right is negative. 
\end{proof}

Given an $s$-stage RK scheme $R_s$, we expand $\nmH{R_su}^2$ with \cref{lem-exps} and then use the necessary condition in \cref{thm-nec} and sufficient condition in \cref{thm-suff} to examine its strong stability. Note that $\{\beta_{k}\}_{k=0}^s$ and $\{\gamma_{i,j}\}_{i,j = 0}^{s-1}$ can be obtained from \cref{alg-energy}, which is based on \cref{prop-routine}. Since $\Gamma^*$ is symmetric, one only needs to check its largest eigenvalue to determine if $\Gamma^*$ is negative definite. 
		\begin{algorithm}[H]
			\SetAlgoLined
			\caption{Obtain coefficients in \cref{lem-exps}}\label{alg-energy}
			\KwIn{$\beta = (\beta_0,\cdots,\beta_s) = 0$, $\Gamma = (\gamma_{i,j})_{i,j = 0}^{s-1} = 0$}
			
			\For{$i \leftarrow 0$ to $s$}{
				$\beta_i\leftarrow\beta_i + \alpha_i^2$\\
				\For{$j \leftarrow i+1$ to $s$}{
					$\tilde{\alpha} \leftarrow 2\alpha_i\alpha_j$\\
					$k \leftarrow i$\\
					$l \leftarrow j$\\
					\While{$l>k$}{
						\Switch{$l$}{\Case{$k$}{$\beta_{k} \leftarrow \beta_{k} + \tilde{\alpha}$}\Case{$k+1$}{$\gamma_{k,k} \leftarrow \gamma_{k,k} - \tilde{\alpha}/2$}\Other{$\gamma_{k,l-1} \leftarrow \gamma_{k,l-1} - \tilde{\alpha}$\\
								$\tilde{\alpha} \leftarrow -\tilde{\alpha}$
						}}
						%						\uIf{l = k}{$\beta_{k} \leftarrow \beta_{l} + \tilde{\alpha}$\;}{
						%						\uElseIf{l = k+1}{$\gamma_{k,k} \leftarrow \gamma_{k,k} - \tilde{\alpha}/2$\;}
						%						\Else{
						%							$\gamma_{k,l-1} \leftarrow \gamma_{k,l-1} - \tilde{\alpha}$\;
						%							$\tilde{\alpha} \leftarrow -\tilde{\alpha} $;
						%						}
						$k \leftarrow k+1$\\
						$l \leftarrow l -1$
					}
				}
			}
		\end{algorithm}

\section{Applications}\label{sec-3}

In this section, we examine strong stability of various RK methods using the framework in the previous section. In tables that will be provided later, the last column ``SS" refers to the strong stability property. A question mark will be put into the entry if strong stability of the corresponding scheme can not be determined. ``no*" means particular counter examples can be constructed. 

\subsection{Linear RK methods}
For general nonlinear systems, to admit accuracy order higher than four, RK methods must have more stages than its order \cite{butcher2016numerical}. However, for autonomous linear systems, the desired order of accuracy can be achieved with the same number of stages. All such methods would be equivalent to the Taylor series method
\begin{equation}
R_p = P_p = \sum_{k = 0}^p\frac{(\tau L)^k}{k!}.
\end{equation}
In \cref{tab-LRK1} and \cref{tab-LRK2}, we document the leading indexes and coefficients of linear RK methods from first order to twelfth order. The leading submatrices for low order RK methods are also given in \cref{tab-LRK1}. We report that the third order, seventh order and eleventh order methods are strongly stable. The fourth order, eighth order and twelfth order methods can not be judged with the framework. All other methods are not strongly stable.
\begin{table}[h!]
	\centering
%	\footnotesize
	\small
	\begin{tabular}{|c|ccccc|}
		\hline
		$p$&$k^*$&$\beta_{k^*}$&$\Gamma^*$&$\lambda(\Gamma^*)$&SS\\
		\hline
		1&1&1&$-\left(
		\begin{array}{c}
		1 \\
		\end{array}
		\right)$&$
		\begin{array}{l}
		-1.00000\ \quad \qquad \\
		\end{array}
		$&no\\
		\hline
		2&2&$\frac{1}{4}$&$-\left(
		\begin{array}{cc}
		1 & \frac{1}{2} \\
		\frac{1}{2} & \frac{1}{2} \\
		\end{array}
		\right)$&$\begin{array}{l}
		-1.30902 \\
		-1.90983\times 10^{-1} \\
		\end{array}$&no\\
		\hline
		3&2&$-\frac{1}{12}$&$-\left(
		\begin{array}{cc}
		1 & \frac{1}{2} \\
		\frac{1}{2} & \frac{1}{3} \\
		\end{array}
		\right)$&$\begin{array}{l}
		-1.26759 \\
		-6.57415\times 10^{-2} \\
		\end{array}$&yes\\
		\hline
		4&3&$-\frac{1}{72}$&$-\left(\begin{array}{ccc}
		1 & \frac{1}{2} & \frac{1}{6} \\
		\frac{1}{2} & \frac{1}{3} & \frac{1}{8} \\
		\frac{1}{6} & \frac{1}{8} & \frac{1}{24} \\
		\end{array}\right)$&$\begin{array}{l}
		-1.30128 \\
		-7.93266\times 10^{-2} \\
		+5.60618\times 10^{-3} \\
		\end{array}$&no*\\
		\hline
		5&3&$\frac{1}{360}$&$-\left(
		\begin{array}{ccc}
		1 & \frac{1}{2} & \frac{1}{6} \\
		\frac{1}{2} & \frac{1}{3} & \frac{1}{8} \\
		\frac{1}{6} & \frac{1}{8} & \frac{1}{20} \\
		\end{array}
		\right)$&$\begin{array}{l}
		-1.30150 \\
		-8.07336\times 10^{-2} \\
		-1.10151\times 10^{-3} \\
		\end{array}$&no\\
		\hline
		6&4&$\frac{1}{2880}$&$-\left(
		\begin{array}{cccc}
		1 & \frac{1}{2} & \frac{1}{6} & \frac{1}{24} \\
		\frac{1}{2} & \frac{1}{3} & \frac{1}{8} & \frac{1}{30} \\
		\frac{1}{6} & \frac{1}{8} & \frac{1}{20} & \frac{1}{72} \\
		\frac{1}{24} & \frac{1}{30} & \frac{1}{72} & \frac{1}{240} \\
		\end{array}
		\right)$&$\begin{array}{l}
		-1.30375 \\
		-8.21871\times 10^{-2} \\
		-1.40529\times 10^{-3} \\
		-1.60133\times 10^{-4} \\
		\end{array}$&no\\
		\hline
		7&4&$-\frac{1}{20160}$&$-\left(
		\begin{array}{cccc}
		1 & \frac{1}{2} & \frac{1}{6} & \frac{1}{24} \\
		\frac{1}{2} & \frac{1}{3} & \frac{1}{8} & \frac{1}{30} \\
		\frac{1}{6} & \frac{1}{8} & \frac{1}{20} & \frac{1}{72} \\
		\frac{1}{24} & \frac{1}{30} & \frac{1}{72} & \frac{1}{252} \\
		\end{array}
		\right)$&$\begin{array}{l}
		-1.30375 \\
		-8.21836\times 10^{-2} \\
		-1.36301\times 10^{-3} \\
		-7.86229\times 10^{-6} \\
		\end{array}$&yes\\
		\hline
		8&5&$-\frac{1}{201600}$&$-\left(\begin{array}{ccccc}
		1 & \frac{1}{2} & \frac{1}{6} & \frac{1}{24} & \frac{1}{120} \\
		\frac{1}{2} & \frac{1}{3} & \frac{1}{8} & \frac{1}{30} & \
		\frac{1}{144} \\
		\frac{1}{6} & \frac{1}{8} & \frac{1}{20} & \frac{1}{72} & \
		\frac{1}{336} \\
		\frac{1}{24} & \frac{1}{30} & \frac{1}{72} & \frac{1}{252} & \
		\frac{1}{1152} \\
		\frac{1}{120} & \frac{1}{144} & \frac{1}{336} & \frac{1}{1152} & \
		\frac{23}{120960} \\
		\end{array}\right)$&$\begin{array}{l}
		-1.30384 \\
		-8.22588\times 10^{-2} \\
		-1.38580\times 10^{-3} \\
		-9.32706\times 10^{-6} \\
		+2.24989\times 10^{-6} \\
		\end{array}$&?\\
		\hline
	\end{tabular}
	\caption{Linear RK methods: from first order to eighth order.}\label{tab-LRK1}
\end{table}
\begin{table}
	\centering
	\small
	\begin{tabular}{|c|cccc|}
		\hline
		$p$&$k^*$&$\beta_{k^*}$&$\lambda(\Gamma^*)$&SS\\
		\hline
		9&5&$\frac{1}{1814400}$&$\begin{array}{l}
		-1.30384 \\
		-8.22588\times 10^{-2} \\
		-1.38585\times 10^{-3} \\
		-9.75366\times 10^{-6} \\
		-3.11800\times 10^{-8} \\
		\end{array}$&no\\
		\hline
		10&6&$\frac{1}{221772800}$&$\begin{array}{l}
		-1.30384 \\
		-8.22613\times 10^{-2} \\
		-1.38688\times 10^{-3} \\
		-9.91006\times 10^{-6} \\
		-4.70638\times 10^{-8} \\
		-1.63872\times 10^{-8} \\
		\end{array}$&no\\
		\hline
		11&6&$-\frac{1}{239500800}$&$\begin{array}{l}
		-1.30384 \\
		-8.22613\times 10^{-2} \\
		-1.38688\times 10^{-3} \\
		-9.90966\times 10^{-6} \\
		-3.87351\times 10^{-8} \\
		-7.87018\times 10^{-11} \\
		\end{array}$&yes\\
		\hline
		12&7&$-\frac{1}{3353011200}$&$\begin{array}{l}
		-1.30384 \\
		-8.22614\times 10^{-2} \\
		-1.38691\times 10^{-3} \\
		-9.91617\times 10^{-6} \\
		-3.93334\times 10^{-8} \\
		+1.45458\times 10^{-10} \\
		-8.54170\times 10^{-11} \\
		\end{array}$&?\\
		\hline
	\end{tabular}
	\caption{Linear RK methods: from ninth order to twelfth order.}\label{tab-LRK2}
\end{table}

\subsection{The classic fourth order method}
The classic fourth order method with four stages, which is widely used in practice due to its stage and order optimality, is unfortunately not covered under the framework. In \cite{sun2017rk4}, we found a counter example to show that the method is not strongly stable, but successively applying the method for two steps yields 
a strongly stable method with eight stages. 
\begin{proposition}[Sun and Shu, 2018]\label{prop-rk4}
	The fourth order RK method with four stages is not strongly stable. More specifically, for $H = I$ and $L = 
	-\left(\begin{array}{ccc}
	1&2&2\\0&1&2\\0&0&1\end{array}\right)$, we have $\nm{P_4}>1$, if $\tau \nmH{L}>0$ is sufficiently small.
\end{proposition}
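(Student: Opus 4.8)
The plan is to write down the energy equality of \cref{lem-exps} for $R_4=P_4$ and the particular $L$ in the statement, and then exhibit a $\tau$-dependent family of test vectors $u=u(\tau)$ with $\nmH{P_4u}>\nmH{u}$ at every sufficiently small $\tau$; since (with $H=I$) $\nm{P_4}$ is the supremum of $\nmH{P_4u}/\nmH{u}$ over \emph{all} $u$, this is what is needed. Two structural reductions come first. With $H=I$ one has $-(L^\top+L)=2\mathbf{1}\mathbf{1}^\top$, $\mathbf{1}=(1,1,1)^\top$, so $L$ is semi-negative and the semi inner product is rank one: writing $\sigma(v)=\mathbf{1}^\top v=v_1+v_2+v_3$, one has $\qipH{v,w}=2\sigma(v)\sigma(w)$. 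Also $-L=I+N$ with $N=\left(\begin{smallmatrix}0&2&2\\0&0&2\\0&0&0\end{smallmatrix}\right)$ nilpotent of index three, hence $L^iu=(-1)^i\bigl(u+iNu+\binom i2 N^2u\bigr)$ and $\sigma(L^iu)=(-1)^iq(i)$ where $q(i)=\sigma(u)+i\,\sigma(Nu)+\binom i2\sigma(N^2u)$ is a polynomial in $i$ of degree at most two; its three coefficients are independent linear functionals of $u$, so $q(0),q(1),q(2)$ can be prescribed arbitrarily (and then $q(3)=q(0)-3q(1)+3q(2)$ is forced).

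For $R_4=P_4$ the coefficients in \cref{lem-exps} are $\beta_0=1$, $\beta_1=\beta_2=0$, $\beta_3=-\tfrac1{72}$, $\beta_4=\tfrac1{576}$ (a one-line computation, or \cref{alg-energy}), and the leading $3\times3$ block of $\Gamma=(\gamma_{i,j})_{0\le i,j\le3}$ is the $\Gamma^*$ displayed in \cref{tab-LRK1}. Inserting the rank-one degeneracy into \eqref{eq-exps} gives
\begin{equation*}
\nmH{P_4u}^2-\nmH{u}^2
=-\tfrac1{72}\tau^6\nmH{L^3u}^2+\tfrac1{576}\tau^8\nmH{L^4u}^2+2\tau\,z^\top\Gamma z,
\qquad z=\bigl((-\tau)^i q(i)\bigr)_{i=0}^3 .
\end{equation*}
From \cref{tab-LRK1}, $\Gamma^*$ has exactly one positive eigenvalue $\mu^*>0$ (equivalently $\operatorname{tr}\Gamma^*=-\tfrac{33}{24}<0$ while $\det\Gamma^*>0$).

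Now fix a unit eigenvector $w=(w_0,w_1,w_2)^\top$ of $\Gamma^*$ for $\mu^*$; note $w_2\neq0$, for otherwise $(w_0,w_1)^\top$ would be a positive-eigenvalue eigenvector of the $2\times2$ principal block of $\Gamma^*$, which is negative definite. For small $\tau>0$ define $u=u(\tau)$ by $q(0)=w_0$, $q(1)=-w_1/\tau$, $q(2)=w_2/\tau^2$; this determines $u(\tau)$ uniquely and explicitly, of the form $u(\tau)=\tau^{-2}w_2\,\mathbf{p}+\tau^{-1}w_1\,\mathbf{q}+w_0\,\mathbf{r}$ with $\mathbf{p}$ a nonzero multiple of $(1,-2,1)^\top$ (one may rescale by $\tau^2$ to get a bounded family that stays bounded away from $0$). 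Then $q(3)=w_0+3w_1/\tau+3w_2/\tau^2$, so $z=(w_0,w_1,w_2,O(\tau))$ and hence $z^\top\Gamma z=\mu^*+O(\tau)>0$ for $\tau$ small; meanwhile $\nmH{L^ku(\tau)}^2=O(\tau^{-4})$, so the first two terms above are only $O(\tau^2)$. Therefore $\nmH{P_4u(\tau)}^2-\nmH{u(\tau)}^2=2\mu^*\tau+O(\tau^2)>0$ once $\tau$ (hence $\tau\nmH{L}$) is small enough, which yields $\nm{P_4}\ge\nmH{P_4u(\tau)}/\nmH{u(\tau)}>1$.

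The crux — and the one point that needs foresight — is that \emph{no fixed $u$ works}: for $\tau$-independent $u$ the sum $2\tau z^\top\Gamma z$ is, to leading order in $\tau$, governed by the first nonvanishing $q(i)$ and so only ever sees a negative \emph{diagonal} entry of $\Gamma^*$, which forces $\nmH{P_4u}\le\nmH{u}$ for small $\tau$. One must therefore let $u$ vary with $\tau$ so as to align $z$ with the positive eigendirection of $\Gamma^*$, which inevitably makes $\nmH{u(\tau)}\sim\tau^{-2}$; the estimate above then goes through precisely because $\beta_1=\beta_2=0$, i.e.\ there are no $\tau^2\nmH{Lu}^2$ or $\tau^4\nmH{L^2u}^2$ terms for that blow-up to amplify. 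Everything else — the rank-one structure of $\qipH{\cdot,\cdot}$, the polynomial form of $\sigma(L^iu)$, and the values of the $\beta_k$ and of $\Gamma^*$ — reduces to short direct computations or to \cref{tab-LRK1} and \cref{alg-energy}.
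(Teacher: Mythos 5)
Your proof is correct, but it follows a genuinely different route from the paper: here \cref{prop-rk4} is quoted as a known result and its justification is simply the citation to \cite{sun2017rk4}, where the counterexample is handled by a direct computation for this specific $L$, outside the energy-equality framework of \cref{sec-2}. You instead rederive the instability from the framework itself. The ingredients all check out: with $H=I$ one indeed has $L^\top+L=-2\mathbf{1}\mathbf{1}^\top$, so each $\qipH{L^iu,L^ju}$ collapses to $2\sigma(L^iu)\sigma(L^ju)$ with $\sigma(L^iu)=(-1)^iq(i)$ for a quadratic $q$ whose values $q(0),q(1),q(2)$ determine $u$ and can be prescribed freely; the coefficients $\beta_1=\beta_2=0$, $\beta_3=-\tfrac1{72}$, $\beta_4=\tfrac1{576}$ and the $\Gamma^*$ of \cref{tab-LRK1} are correct; the eigenvector argument for $w_2\neq0$ is valid; and the scaling $q(2)=w_2/\tau^2$ makes the $\beta$-terms $O(\tau^2)$ while $2\tau z^\top\Gamma z=2\mu^*\tau+O(\tau^2)$ with $\mu^*\approx5.6\times10^{-3}$ the positive eigenvalue of $\Gamma^*$, so $\nmH{P_4u(\tau)}>\nmH{u(\tau)}$ for all small $\tau>0$, exactly what the statement asserts. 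What your route buys: it shows that a positive eigenvalue of the leading submatrix is, for this rank-one-defect $L$, not merely a failure of the sufficient condition \cref{thm-suff} but can be upgraded to an actual counterexample, and your closing remark correctly explains why $\tau$-dependent data are unavoidable (for fixed $u$ the leading term is a negative diagonal $\gamma$ entry or the negative $\beta_3$ term). What the paper's cited route buys is brevity and independence from the expansion machinery. One cosmetic caveat: your parenthetical ``equivalently $\operatorname{tr}\Gamma^*<0$ and $\det\Gamma^*>0$'' is only an equivalence in the direction you use (those two facts imply exactly one positive eigenvalue); stated as an equivalence it is loose, though harmless since the table's eigenvalues settle the point.
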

\begin{theorem}[Sun and Shu, 2018]
	The fourth order RK method with four stages is strongly stable in two steps. In other words, there exists a constant $\lambda$, such that $\nmH{(P_4)^2}\leq 1$ if $\tau \nmH{L}\leq \lambda$.
\end{theorem}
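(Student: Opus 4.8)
The plan is to apply the framework of \cref{sec-2} directly to the polynomial $R_8 := (P_4)^2 = \bigl(\sum_{k=0}^4 \frac{(\tau L)^k}{k!}\bigr)^2$, regarded as a linear eight-stage method: it is of the form \eqref{eq-rk} with $\alpha_0 = 1$ and $\alpha_8 = \frac{1}{576}\neq 0$, and since two successive steps of $P_4$ with the same $\tau$ compose to the single operator $R_8(\tau L)$, the bound $\nmH{R_8}\leq 1$ under a CFL restriction $\tau\nmH{L}\leq\lambda$ is precisely strong stability in two steps. Because $P_4$ reproduces the Taylor series of $e^{\tau L}$ through degree four, $R_8$ reproduces that of $e^{2\tau L}$ through degree four, so $\alpha_k = \frac{2^k}{k!}$ for $0\leq k\leq 4$, while $\alpha_5,\dots,\alpha_8$ are the truncated Cauchy products $\alpha_k = \sum_{i+j=k,\, 0\leq i,j\leq 4}\frac{1}{i!\,j!}$ (so $\alpha_5 = \frac14$, confirming the linear order is exactly four). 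Feeding $\{\alpha_k\}_{k=0}^8$ into \cref{alg-energy} -- equivalently, iterating \cref{prop-routine} through \cref{cor-induction} -- produces the energy equality \eqref{eq-exps} with explicit $\{\beta_k\}_{k=0}^8$ and $\{\gamma_{i,j}\}_{i,j=0}^7$.

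Next I would pin down the leading index. As in the proof of \cref{lem-exps}, restricting to skew matrices $L$ with $L^\top H + HL = 0$ kills every $\qipH{\cdot,\cdot}$ term and forces the polynomial identity $1 + \sum_k \beta_k\omega^{2k} = |P_4(i\omega)|^4$. A direct expansion gives $|P_4(i\omega)|^2 = 1 - \frac{\omega^6}{72} + \frac{\omega^8}{576}$, hence $|P_4(i\omega)|^4 = 1 - \frac{\omega^6}{36} + \frac{\omega^8}{288} + \cO(\omega^{12})$, so $\beta_1 = \beta_2 = 0$ and $\beta_3 = -\frac{1}{36}$. Thus the leading index is $k^* = 3$ with leading coefficient $\beta_{k^*} = -\frac{1}{36} < 0$, so \cref{thm-nec} does not exclude the method and everything reduces to the $3\times 3$ leading submatrix $\Gamma^* = (\gamma_{i,j})_{0\leq i,j\leq 2}$.

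The main task is then to compute $\Gamma^*$ and check it is negative definite. Tracking which products $\alpha_i\alpha_j$ (only those with $i+j\leq 5$) survive the repeated integration by parts and land at indices $\leq 2$, the bookkeeping gives
\[
\Gamma^* = -\begin{pmatrix} 2 & 2 & \frac{4}{3} \\ 2 & \frac{8}{3} & 2 \\ \frac{4}{3} & 2 & \frac{19}{12} \end{pmatrix}.
\]
The leading principal minors of $-\Gamma^*$ are $2$, $\frac{4}{3}$, and $\frac{1}{27}$, all strictly positive, so $\Gamma^*$ is negative definite by Sylvester's criterion. With $\beta_{k^*} < 0$ and $\Gamma^*$ negative definite, \cref{thm-suff} applied to $R_8$ supplies a constant $\lambda > 0$ with $\nmH{R_8}\leq 1$ whenever $\tau\nmH{L}\leq\lambda$, which is the theorem.

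I expect the genuine obstacle to be exactly this computation of $\Gamma^*$. It is only elementary algebra, but the conclusion is delicate: for the single step $P_4$ the analogous $3\times 3$ leading submatrix is \emph{not} negative definite (it has a small positive eigenvalue; see \cref{tab-LRK1}), which is precisely why $P_4$ is not strongly stable in one step, so one must keep careful account of every $\alpha_i\alpha_j$ contribution and its sign through the reductions of \cref{prop-routine}. In practice the construction of $\beta$ and $\Gamma$ is delegated to \cref{alg-energy}, and the negative definiteness of $\Gamma^*$ is verified numerically via its largest eigenvalue.
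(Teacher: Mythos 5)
Your proposal is correct and takes essentially the same route as the paper: regard $(P_4)^2$ as an eight-stage linear scheme, build the energy equality of \cref{lem-exps} via \cref{alg-energy}, and apply \cref{thm-suff} with $k^*=3$, $\beta_{k^*}=-\frac{1}{36}$, and the leading submatrix $\Gamma^*$, all of which match \cref{tab-rk4}. Your only departures are cosmetic --- deducing $\beta_1=\beta_2=0$, $\beta_3=-\frac{1}{36}$ from $|P_4(i\omega)|^4$ on skew operators, and certifying the negative definiteness of $\Gamma^*$ exactly via Sylvester's criterion (minors $2$, $\frac{4}{3}$, $\frac{1}{27}$) instead of the numerically reported eigenvalues.
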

Here we examine multi-step strong stability of the fourth order method using our framework. Note the derivation using this framework is slightly different from that in \cite{sun2017rk4}. The relevant quantities for strong stability are given in \cref{tab-rk4}. Note that the method is both two-step and three-step strongly stable (with the same time step size), which means the norm of the solution after the first step is always bounded by the initial data, if sufficiently small uniform time steps are used.
\begin{table}[h!]
	\centering
	\small
	\begin{tabular}{|c|ccccc|}
		\hline
		$(P_4)^m$&$k^*$&$\beta_{k^*}$&$\Gamma^*$&$\lambda(\Gamma^*)$&SS\\
		\hline
		$(P_4)^2$&3&$-\frac{1}{36}$&$-\left(
		\begin{array}{ccc}
		2 & 2 & \frac{4}{3} \\
		2 & \frac{8}{3} & 2 \\
		\frac{4}{3} & 2 & \frac{19}{12} \\
		\end{array}
		\right)$&$\begin{array}{l}
		-5.73797 \\
		-4.99093\times 10^{-1} \\
		-1.29329\times 10^{-2} \\
		\end{array}$&yes\\
		\hline
		$(P_4)^3$&3&$-\frac{1}{24}$&
		$-\left(
		\begin{array}{ccc}
		3 & \frac{9}{2} & \frac{9}{2} \\
		\frac{9}{2} & 9 & \frac{81}{8} \\
		\frac{9}{2} & \frac{81}{8} & \frac{97}{8} \\
		\end{array}
		\right)$&$\begin{array}{l}
		-2.28380\times 10^1 \\
		-1.21069 \\
		-7.62892\times 10^{-2} \\
		\end{array}$&yes\\
		\hline
	\end{tabular}
	\caption{The classic fourth order method: multi-step strong stability.}\label{tab-rk4}
\end{table}
\begin{theorem}\label{thm-RK4n} 
	The four-stage fourth order RK method has the following property. With uniform time steps such that $\tau \nmH{L}\leq {\lambda}$ for sufficiently small $\lambda$, $\nmH{u^{n}}\leq\nmH{u^0}$ for all $n>1$. 
\end{theorem}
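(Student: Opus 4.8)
The plan is to bootstrap from the two-step and three-step strong stability of $P_4$, both of which are already in hand: in \cref{tab-rk4} the leading coefficient $\beta_{k^*}$ is negative and the leading submatrix $\Gamma^*$ has a strictly negative spectrum for \emph{both} $(P_4)^2$ and $(P_4)^3$, so \cref{thm-suff} yields a common $\lambda$ with $\nmH{(P_4)^2}\le 1$ and $\nmH{(P_4)^3}\le 1$ whenever $\tau\nmH{L}\le\lambda$. Fix such a $\tau$ and write $R=P_4$. The only additional ingredient I need is the elementary number-theoretic fact that every integer $n\ge 2$ can be written as $n=2a+3b$ with $a,b$ nonnegative integers: if $n$ is even take $a=n/2,\ b=0$; if $n$ is odd then $n-3\ge 0$ is even, so take $a=(n-3)/2,\ b=1$.

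Given such a decomposition, powers of $R$ commute, so $R^n=(R^2)^a(R^3)^b$, and submultiplicativity of the operator norm $\nmH{\cdot}$ gives
\begin{equation*}
\nmH{u^n}=\nmH{R^n u^0}\le \nmH{R^2}^a\,\nmH{R^3}^b\,\nmH{u^0}\le\nmH{u^0},
\end{equation*}
which is exactly the claim. The uniformity of the time step enters here: because the same matrix $R$ is applied at every step, the same bounds $\nmH{R^2}\le 1$ and $\nmH{R^3}\le 1$ are valid throughout, and no accumulation of constants occurs.

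I do not expect a genuine obstacle in the argument above; the real content is the verification already recorded in \cref{tab-rk4}, and the one point worth stressing in the write-up is \emph{why} the restriction $n>1$ is present and why both the two-step and the three-step results are needed. Since $R=P_4$ itself is not strongly stable (\cref{prop-rk4}), $\nmH{R}$ may exceed $1$ and the first step $u^0\mapsto u^1$ can increase the energy, so one cannot iterate a single-step estimate. Moreover $(R^2)^a$ alone reaches only even $n$; it is precisely the extra contractive factor $R^3$, valid for the \emph{same} $\lambda$, that closes the gap at odd $n\ge 3$. Thus once the hypotheses of \cref{thm-suff} are checked for $R^3$ as well as $R^2$, the theorem follows at once from the splitting $n=2a+3b$.
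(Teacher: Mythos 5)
Your proposal is correct and matches the paper's intended argument: the paper likewise establishes two-step and three-step strong stability of $P_4$ via \cref{thm-suff} and the data in \cref{tab-rk4} (with a common $\lambda$), and then the claim for all $n>1$ follows exactly as you write, by splitting $n=2a+3b$ and using submultiplicativity of the induced norm $\nmH{\cdot}$ under uniform time steps. No gaps; your explicit remarks on why $n>1$ is required and why the three-step result is needed for odd $n$ are consistent with the paper's discussion of \cref{prop-rk4}.
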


\subsection{SSPRK methods}
In this section, we study the strong stability of several SSPRK methods. The (explicit) SSPRK methods are a class of RK methods that can be formulated as combinations of Euler forward steps. The second order method with two stages, and the third order method with three stages are equivalent to the linear RK methods for autonomous linear systems, which have been discussed. For fourth order methods, in order to avoid backward-in-time steps and negative coefficients, at least five stages should be used \cite{gottlieb1998total,kraaijevanger1991contractivity}, which are denoted as SSPRK(5,4). We specially consider the method that is independently discovered in \cite{kraaijevanger1991contractivity} and \cite{spiteri2002new}. When applied on \eqref{eq-odes}, the method takes the form 
\begin{equation}
\text{SSPRK(5,4)} = P_4+4.477718303076007\times 10^{-3} (\tau L)^5.
\end{equation} 
We will study its strong stability (in multiple steps) using our framework. Besides SSPRK(5,4), we will also consider two commonly used low storage SSPRK methods, a third order method with four stages SSPRK(4,3) and a fourth order method with ten stages SSPRK(10,4) \cite{kraaijevanger1991contractivity, spiteri2002new}. The two methods are 
\begin{equation}
\text{SSPRK(4,3)} = P_3 + \frac{1}{48}(\tau L)^4,
\end{equation}
and 
\begin{equation}
\begin{aligned}
\text{SSPRK(10,4)} =& P_4
+\frac{17}{2160}(\tau L)^5 + \frac{7}{6480}(\tau L)^6 + \frac{1}{9720}(\tau L)^7 \\
&+ \frac{1}{155520}(\tau L)^8 + \frac{1}{4199040}(\tau L)^9 + \frac{1}{251942400}(\tau L)^{10},
\end{aligned}
\end{equation}
when applied on \eqref{eq-odes}. We remark that the strong stability of SSPRK(10,4) has been proved by Ranocha and \:Offner in \cite{ranocha2018l_2}. This is a reexamination using our framework. 
\begin{table}[h!]
	\centering
	\small
	\begin{tabular}{|c|ccccc|}
		\hline
		SSPRK&$k^*$&$\beta_{k^*}$&$\Gamma^*$&$\lambda(\Gamma^*)$&SS\\
		\hline
		$\text{(4,3)}$&2&$-\frac{1}{24}$&$-\left(
		\begin{array}{cc}
		1 & \frac{1}{2} \\
		\frac{1}{2} & \frac{1}{3} \\
		\end{array}
		\right)$&$
		\begin{array}{l}
		-1.26759 \\
		-6.57415\times 10^{-2} \\
		\end{array}$&yes\\
		\hline
		$\text{(10,4)}$&3&$-\frac{1}{3240}$&$-\left(
		\begin{array}{ccc}
		1 & \frac{1}{2} & \frac{1}{6} \\
		\frac{1}{2} & \frac{1}{3} & \frac{1}{8} \\
		\frac{1}{6} & \frac{1}{8} & \frac{107}{2160} \\
		\end{array}
		\right)$&$\begin{array}{l}
		-1.30149 \\
		-8.06493\times 10^{-2} \\
		-7.35115\times 10^{-4} \\
		\end{array}$&yes\\
		\hline
		$\text{(5,4)}$&3&$-4.93345\times10^{-3}$&$-\left(
		\begin{array}{ccc}
		1 & \frac{1}{2} & \frac{1}{6} \\
		\frac{1}{2} & \frac{1}{3} & \frac{1}{8} \\
		\frac{1}{6} & \frac{1}{8} & \frac{1}{24} \\
		\end{array}
		\right)$&
		$\begin{array}{l}
		-1.30140 \\
		-8.00541\times 10^{-2} \\
		+1.97309\times 10^{-3} \\
		\end{array}$&no*\\
		\hline
		$\text{(5,4)}^2$&3&$-9.86690\times 10^{-3}$&$-\left(\begin{array}{ccc}
		2 & 2 & \frac{4}{3} \\
		2 & \frac{8}{3} & 2 \\
		\frac{4}{3} & 2 & 1.5923 \\
		\end{array}\right)$&$\begin{array}{l}
		-5.74021 \\
		-5.01739\times 10^{-1} \\
		-1.70056\times 10^{-2} \\
		\end{array}$&yes\\
		\hline
		$\text{(5,4)}^3$&3&$-1.48004\times 10^{-2}$&$-\left(
		\begin{array}{ccc}
		3 & \frac{9}{2} & \frac{9}{2} \\
		\frac{9}{2} & 9 & \frac{81}{8} \\
		\frac{9}{2} & \frac{81}{8} & 12.138 \\
		\end{array}
		\right)$&$
		\begin{array}{l}
		-2.28450\times 10^1 \\
		-1.21415 \\
		-7.93174\times 10^{-2} \\
		\end{array}$&yes\\
		\hline
	\end{tabular}
	\caption{SSPRK methods: strong stability and multi-step strong stability.}\label{tab-ssprk}
\end{table}
From \cref{tab-ssprk}, we are able to conclude the follow results.
\begin{theorem}
	SSPRK(4,3) and SSPRK(10,4) are strongly stable.
\end{theorem}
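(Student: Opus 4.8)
The plan is to invoke the sufficient condition in \cref{thm-suff}, which reduces the claim to two arithmetic checks for each of the two methods: that the leading coefficient $\beta_{k^*}$ is strictly negative and that the leading submatrix $\Gamma^*$ is negative definite. Both quantities have already been tabulated in \cref{tab-ssprk}, so the proof amounts to reading off and verifying those entries.

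First I would compute $R_s$ for each method. For SSPRK(4,3), $R_4 = P_3 + \frac{1}{48}(\tau L)^4$, so $\alpha_k = \frac{1}{k!}$ for $k = 0,1,2,3$ and $\alpha_4 = \frac{1}{48}$. For SSPRK(10,4), $R_{10} = P_4 + \sum_{k=5}^{10}\alpha_k(\tau L)^k$ with the six explicitly given coefficients. Next I would run \cref{alg-energy} on each coefficient vector $\alpha$ to produce $\{\beta_k\}$ and $\Gamma = (\gamma_{i,j})$; this is the elementary bookkeeping encoded in \cref{prop-routine} and \cref{cor-induction}. From the output one identifies the leading index $k^*$ (the first $k\geq 1$ with $\beta_k \geq 0$, minus one, or more precisely the definition given before \cref{thm-nec}): $k^* = 2$ for SSPRK(4,3) with $\beta_{2} = -\frac{1}{24}$, and $k^* = 3$ for SSPRK(10,4) with $\beta_{3} = -\frac{1}{3240}$, matching \cref{tab-ssprk}. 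In particular $\beta_{k^*} < 0$ in both cases.

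Then I would verify that $\Gamma^*$ is negative definite. For SSPRK(4,3), $\Gamma^* = -\left(\begin{smallmatrix} 1 & \frac12 \\ \frac12 & \frac13\end{smallmatrix}\right)$, whose eigenvalues are both negative (approximately $-1.26759$ and $-6.57415\times 10^{-2}$), or equivalently $-\Gamma^*$ is symmetric positive definite since its leading principal minors $1$ and $\frac13 - \frac14 = \frac1{12}$ are positive. For SSPRK(10,4), $\Gamma^* = -\left(\begin{smallmatrix} 1 & \frac12 & \frac16 \\ \frac12 & \frac13 & \frac18 \\ \frac16 & \frac18 & \frac{107}{2160}\end{smallmatrix}\right)$; one checks $-\Gamma^*$ has positive leading principal minors ($1$, $\frac1{12}$, and the $3\times 3$ determinant, which is positive because $\frac{107}{2160}$ exceeds the value $\frac1{24}$ that would make it degenerate in the relevant direction), so all three eigenvalues of $\Gamma^*$ are negative (approximately $-1.30149$, $-8.06493\times 10^{-2}$, $-7.35115\times 10^{-4}$). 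With $\beta_{k^*} < 0$ and $\Gamma^*$ negative definite in both cases, \cref{thm-suff} furnishes a constant $\lambda$ with $\nmH{R_s}\leq 1$ whenever $\tau\nmH{L}\leq\lambda$, i.e.\ both methods are strongly stable.

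The only genuine obstacle is confirming the $(3,3)$ entry $\frac{107}{2160}$ for SSPRK(10,4), since this is where the higher-order terms $\alpha_5,\dots,\alpha_{10}$ feed back into $\gamma_{2,2}$ through the alternating-sign recursion in \cref{prop-routine}; everything else either coincides with the Taylor-series case $P_4$ already handled in \cref{tab-LRK1} or is immediate. This is a finite, mechanical computation — best carried out with the computer as the paper advocates — and once that single entry is pinned down, negative definiteness of $\Gamma^*$ follows from the positivity of its principal minors. (Note the contrast with SSPRK(5,4), where the analogous entry makes $\Gamma^*$ indefinite and the one-step argument fails; that case requires the multi-step analysis recorded separately in \cref{tab-ssprk}.)
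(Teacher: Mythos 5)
Your proposal is correct and follows essentially the same route as the paper: the paper's proof of this theorem is precisely the entries of \cref{tab-ssprk} (computed via \cref{alg-energy}, i.e.\ \cref{lem-exps}) combined with the sufficient condition of \cref{thm-suff}, which is exactly what you do, and your values $k^*=2$, $\beta_{k^*}=-\tfrac{1}{24}$ and $k^*=3$, $\beta_{k^*}=-\tfrac{1}{3240}$, $\gamma_{2,2}=-\tfrac{107}{2160}$ agree with the table (and with \cref{lem-key}). The only blemish is your parenthetical restatement of the leading-index definition, which is garbled, but you defer to the paper's definition and use it correctly, so no gap.
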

\begin{theorem} 
	The property stated in \cref{thm-RK4n} also holds for SSPRK(5,4).
\end{theorem}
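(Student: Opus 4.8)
The plan is to mirror the argument behind \cref{thm-RK4n} for the classic fourth order method: although $\text{SSPRK(5,4)}$ itself is not strongly stable, I will show that both its two-step and its three-step compositions are, under a common time-step restriction, and then patch together an arbitrary number of steps $n>1$ using the decomposition $n = 2a+3b$. Write $R_5 = \text{SSPRK(5,4)} = P_4 + 4.477718303076007\times 10^{-3}(\tau L)^5$ for the one-step amplification matrix, so that $u^n = (R_5)^n u^0$.

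First I would invoke the sufficient condition \cref{thm-suff}. From \cref{tab-ssprk}, the $10$-stage scheme $(R_5)^2$ has leading index $k^*=3$, leading coefficient $\beta_{k^*} = -9.86690\times 10^{-3}<0$, and a leading submatrix $\Gamma^*$ whose three eigenvalues are all negative, hence negative definite; similarly the $15$-stage scheme $(R_5)^3$ has $k^*=3$, $\beta_{k^*}=-1.48004\times10^{-2}<0$, and a negative definite $\Gamma^*$. Applying \cref{thm-suff} to each of these two schemes therefore yields constants $\lambda_2,\lambda_3>0$ with $\nmH{(R_5)^2}\le 1$ whenever $\tau\nmH{L}\le\lambda_2$ and $\nmH{(R_5)^3}\le 1$ whenever $\tau\nmH{L}\le\lambda_3$. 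Set $\lambda=\min\{\lambda_2,\lambda_3\}$. Now for every integer $n\ge 2$ there are nonnegative integers $a,b$ with $n=2a+3b$ (take $b=0$, $a=n/2$ for $n$ even, and $b=1$, $a=(n-3)/2$ for $n$ odd). Since $R_5$, $(R_5)^2$ and $(R_5)^3$ are all polynomials in $\tau L$ they commute, so $(R_5)^n = \big((R_5)^2\big)^a\big((R_5)^3\big)^b$, and by submultiplicativity of $\nmH{\cdot}$, for $\tau\nmH{L}\le\lambda$,
\begin{equation}
\nmH{u^n} = \nmH{(R_5)^n u^0}\;\le\;\nmH{(R_5)^2}^{\,a}\,\nmH{(R_5)^3}^{\,b}\,\nmH{u^0}\;\le\;\nmH{u^0},
\end{equation}
each factor being at most one. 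This is exactly the property stated in \cref{thm-RK4n}, now with $\text{SSPRK(5,4)}$ in place of $P_4$.

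There is no genuine obstacle once the table entries for $(5,4)^2$ and $(5,4)^3$ are in hand: the argument reduces to the elementary fact that every integer $n\ge 2$ lies in the numerical semigroup generated by $2$ and $3$, together with commutativity of the polynomial operators and submultiplicativity of the norm. The only mild point worth stating explicitly is that a single CFL bound $\lambda=\min\{\lambda_2,\lambda_3\}$ must be used so that the same time-step restriction governs every step of the iteration. As with $P_4$, the conclusion cannot be extended to $n=1$, since $\text{SSPRK(5,4)}$ is itself not strongly stable (the ``no*'' entry in \cref{tab-ssprk}).
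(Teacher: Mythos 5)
Your proposal is correct and follows essentially the same route as the paper: the paper also reads off the $(5,4)^2$ and $(5,4)^3$ entries of \cref{tab-ssprk}, invokes the sufficient condition \cref{thm-suff} for both compositions under a common time-step restriction, and concludes $\nmH{u^n}\leq\nmH{u^0}$ for all $n>1$ exactly as in the argument for the classic fourth order method via the decomposition of $n$ into steps of two and three.
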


The behavior of SSPRK(5,4) is very similar to that of the classic fourth order method, since it is almost the four-stage method except for a small fifth order perturbation. Although the method can not be judged within this framework, one can indeed use the same counter example in \cref{prop-rk4} to disprove its strong stability. 
%(up to a roundoff error). 
The proof would be similar to that in \cite{sun2017rk4}.

\subsection{Embedded RK methods in NDSolve}
Finally, we consider embedded RK pairs that are used in NDSolve, a function for numerically solving differentiable equations in the commercial software Mathematica.  Embedded RK methods are pairs of RK methods sharing the same stages. The notation $p(\hat{p})$ is commonly used, if two methods in the pair are of order $p$ and order $\hat{p}$ respectively. Their Butcher tableau has the following form.
\begin{equation}
\begin{tabular}{c|cccccc}
0&&&&&\\
$c_2$&$a_{2,1}$&&&&\\
$c_{3}$&$a_{3,1}$&$a_{3,2}$&$$&&\\
$\vdots$&$\vdots$&$\vdots$&$\ddots$&&\\
$c_{s}$&$a_{s,1}$&$a_{s,2}$&$\cdots$&$a_{s,s-1}$&\\
\hline
&$b_1$&$b_2$&$\cdots$&$b_{s-1}$&$b_s$\\
&$\hat{b}_1$&$\hat{b}_2$&$\cdots$&$\hat{b}_{s-1}$&$\hat{b}_s$\\
\end{tabular}
\end{equation}
For $\frac{d}{dt}u= f(t,u)$, the tableau gives two solutions
\begin{equation}
u^{n+1} = u^n +\tau \sum_{i=1}^s  b_i k_i,\qquad \hat{u}^{n+1} = u^n + \tau\sum_{i=1}^s  \hat{b}_i k_i,
\end{equation}
where
\begin{equation}
k_i = f(t+c_i\tau, u_n + \tau \sum_{j = 1}^sa_{i,j} k_j).
\end{equation}
Then the difference $u^{n+1} - \hat{u}^{n+1}$ can be used for local error estimates for time step adaption. 

We examine strong stability of all such pairs used in Mathematica from order $2(1)$ to order $9(8)$.  These methods are chosen with several desired properties being considered, including the FSAL (First Same As Last) strategy and stiffness detection capability \cite{wolfram2008advanced}. Tableaux of 2(1), 3(2) and 4(3) pairs \cite{sofroniou2004construction} are given in \cref{tab-pair2}, \cref{tab-pair3} and \cref{tab-pair4}.
\begin{table}[ht!]
	\centering
	\begin{tabular}{c|ccc}
		0 & \text{} & \text{} & \text{} \\
		1 & 1 & \text{} & \text{} \\
		1 & $\frac{1}{2}$ & $\frac{1}{2}$ & \text{} \\
		\hline
		\text{} & $\frac{1}{2}$ & $\frac{1}{2}$ & 0 \\
		\text{} & 1 & $-\frac{1}{6}$ & $\frac{1}{6}$ \\
	\end{tabular}
	\hspace{\textwidth}
	\caption{Tableau of embedded RK $2(1)$.}\label{tab-pair2}
\end{table}
\begin{table}[ht!]
	\centering
	\begin{tabular}{c|cccc}
		0 & \text{} & \text{} & \text{} & \text{} \\
		$\frac{1}{2}$ & $\frac{1}{2}$ & \text{} & \text{} & \text{} \\
		1 & -1 & 2 & \text{} & \text{} \\
		1 & $\frac{1}{6}$ &$ \frac{2}{3}$ &$ \frac{1}{6}$ & \text{} \\
		\hline
		\text{} & $\frac{1}{6}$ & $\frac{2}{3} $& $\frac{1}{6}$ & 0 \\
		\text{} & $\frac{22-\sqrt{82}}{72}$&$ \frac{\sqrt{82}+14}{36} $ &$ \frac{\sqrt{82}-4}{144} $& $\frac{16-\sqrt{82}}{48} $ \\
	\end{tabular}
	\hspace{\textwidth}
	\caption{Tableau of embedded RK $3(2)$.}\label{tab-pair3}
\end{table}
\begin{table}[ht!]
	\centering
	\begin{tabular}{c|ccccc}
		0 & \text{} & \text{} & \text{} & \text{} & \text{} \\
		$	\frac{2}{5}$ & $\frac{2}{5} $& \text{} & \text{} & \text{} & \text{} \\
		$	\frac{3}{5}$ & $-\frac{3}{20}$ & $\frac{3}{4} $& \text{} & \text{} & \text{} \\
		1 &$ \frac{19}{44}$ & $-\frac{15}{44}$ & $\frac{10}{11} $& \text{} & \text{} \\
		1 & $\frac{11}{72}$&$ \frac{25}{72} $& $\frac{25}{72} $&$ \frac{11}{72} $& \text{} \\
		\hline
		\text{} & $\frac{11}{72}$ & $\frac{25}{72}$ & $\frac{25}{72}$ & $\frac{11}{72}$ & 0 \\
		\text{} & $\frac{1251515}{8970912} $&$ \frac{3710105}{8970912} $& $\frac{2519695}{8970912}$ & $\frac{61105}{8970912}$ & $\frac{119041}{747576}$ \\
	\end{tabular}
	\hspace{\textwidth}
	\caption{Tableau of embedded RK $4(3)$.}\label{tab-pair4}
\end{table}
For the 5(4) pair \cite{bogacki1996efficient,shampine2018numerical} and higher order pairs \cite{verner2010numerically}, the tableaux can be obtained through the Mathematica command
\begin{equation}
\text{NDSolve\`{}EmbeddedExplicitRungeKuttaCoefficients[}p\text{, Infinity]}.
\end{equation}
The output takes the form
\begin{equation}
\{A,b,c,b-\hat{b}\}
\end{equation}
and the corresponding tableau is
\begin{equation}
\begin{tabular}{c|c}
$c$&$A$\\
\hline
&$b$\\
&$\hat{b}$
\end{tabular}.
\end{equation}
The stability results are documented in \cref{tab-NDSolve1} and \cref{tab-NDSolve2}.

\begin{table}
	\centering
	\small
	\begin{tabular}{|c|c|ccccc|}
		\hline
		Methods&$s$&$p$/$\hat{p}$&$k^*$&$\beta_{k^*}$&$\lambda(\Gamma^*)$&SS\\
		\hline
		2(1)&3&2&2&$\frac{1}{4}$&
		$\begin{array}{l}
		-1.30902\\
		-1.90983\times 10^{-1}
		\end{array}$&no\\
		\cline{3-7}
		&&1&1&1&$
		\begin{array}{ll}
		-1.00000&\  \qquad\\
		\end{array}$&no\\
		\hline
		3(2)&4&3&2&$-\frac{1}{12}$&
		$\begin{array}{l}-1.26759\\-6.57415\times 10^{-2}\end{array}$&yes\\
		\cline{3-7}
		&&2&2&$\frac{1}{12}$&
		$\begin{array}{l}
		-1.28130 \\
		-1.11257\times 10^{-1} \\
		\end{array}$&no\\
		\hline
		4(3)&5&4&3&$-\frac{1}{72}$&$\begin{array}{l}
		-1.30128 \\
		-7.93266\times 10^{-2} \\
		+5.60618\times 10^{-3} \\
		\end{array}$&no*\\
		\cline{3-7}
		&&3&2&$-\frac{119041}{4485456}$&$\begin{array}{l}
		-1.26759 \\
		-6.57415\times 10^{-2} \\
		\end{array}$&yes\\
		\hline
		5(4)&8&5&3&$-\frac{43}{6209280}$&$
		\begin{array}{l}
		-1.3015 \\
		-8.07336\times 10^{-2} \\
		-1.10151\times 10^{-3} \\
		\end{array}$&yes\\
		\cline{3-7}
		&&4&3&$\frac{51767}{367590960}$&$\begin{array}{l}
		-1.30150 \\
		-8.07430\times 10^{-2} \\
		-1.14174\times 10^{-3} \\
		\end{array}$&no\\
		\hline
		6(5)&9&6&4&$\frac{79007}{2560896000}$&
		$\begin{array}{l}
		-1.30375 \\
		-8.21839\times 10^{-2} \\
		-1.36689\times 10^{-3} \\
		-2.38718\times 10^{-5} \\
		\end{array}$&no\\
		\cline{3-7}
		&&5&3&$\frac{1233467}{9027158400}$&
		$\begin{array}{l}
		-1.30150 \\
		-8.07336\times 10^{-2} \\
		-1.10151\times 10^{-3} \\
		\end{array}$&no\\
		\hline
		7(6)&10&7&4&$\frac{29615605063}{38967665360400000}$&
		$\begin{array}{l}
		-1.30375 \\
		-8.21836\times 10^{-2} \\
		-1.36301\times 10^{-3} \\
		-7.86229\times 10^{-6} \\
		\end{array}$&no\\
		\cline{3-7}
		&&6&4&$-\frac{20202919901}{1855603112400000}$&$
		\begin{array}{l}
		-1.30375 \\
		-8.21833\times 10^{-2} \\
		-1.35985\times 10^{-3} \\
		+5.49402\times 10^{-6} \\
		\end{array}$&?\\
		\hline
	\end{tabular}
	\hspace{\textwidth}
	\flushleft
	\begin{footnotesize}
		*The fourth order method in the 4(3) pair is exactly the classic four-stage fourth order method for autonomous linear systems.
	\end{footnotesize}
	\caption{Embedded RK pairs: from $2(1)$ to $7(6)$ pairs.} \label{tab-NDSolve1}
\end{table}
\begin{table}
	\centering
	\small
	\begin{tabular}{|c|c|ccccc|}
		\hline
		Methods&s&p&$k^*$&$\beta_{k^*}$&$\lambda(\Gamma^*)$&SS\\
		\hline
		8(7)&13&8&5&$-3.21308\times 10^{-7}$&
		$\begin{array}{l}
		-1.30384 \\
		-8.22588\times 10^{-2} \\
		-1.38584\times 10^{-3} \\
		-9.71236\times 10^{-6} \\
		+1.43671\times 10^{-7} \\
		\end{array}$&?\\
		\cline{3-7}
		&&7&4&$-2.39706\times 10^{-6}$&$\begin{array}{l}
		-1.30375 \\
		-8.21836\times 10^{-2} \\
		-1.36301\times 10^{-3} \\
		-7.86229\times 10^{-6} \\
		\end{array}$&yes\\
		\hline
		9(8)&16&9&5&$-8.95352\times 10^{-9}$&$\begin{array}{l}
		-1.30384 \\
		-8.22588\times 10^{-2} \\
		-1.38585\times 10^{-3} \\
		-9.75366\times 10^{-6} \\
		-3.11800\times 10^{-8} \\
		\end{array}$&yes\\
		\cline{3-7}
		&&8&5&$-5.46447\times 10^{-7}$&$\begin{array}{l}
		-1.30384 \\
		-8.22588\times 10^{-2} \\
		-1.38585\times 10^{-3} \\
		-9.78641\times 10^{-6} \\
		-1.64476\times 10^{-7} \\
		\end{array}$&yes\\
		\hline
	\end{tabular}
	\caption{Embedded RK pairs: $8(7)$ pair and $9(8)$ pair.} \label{tab-NDSolve2}
\end{table}

\section{Characterization of strongly stable methods}\label{sec-4}

By numerically examining various RK methods, we recognize certain patterns of the coefficients in \cref{lem-exps}, which will be proved in this section. 

To characterize the coefficients, we would like to split the RK operator as a truncated exponential and a high order perturbation. 
\begin{equation}
R_s = P_p + (\tau L)^{p+1} Q_{s-(p+1)},
\end{equation}
where
\begin{equation}\label{eq-rksplit}
P_p = \sum_{k=0}^p \frac{(\tau L)^k}{k!},\qquad
Q_{s-(p+1)} = \sum_{k=0}^{s-(p+1)} \alpha_{k+p+1} {(\tau L)^k}, \quad \alpha_{p+1} \neq \frac{1}{(p+1)!}.
\end{equation}
Note that an RK method of order $p$ for general nonlinear system may achieve higher order accuracy when applied to linear autonomous problems. Without special clarification, the order $p$ in this section refers to the linear order. 

\subsection{Coefficients in the energy equality}
Coefficients in \cref{lem-exps} have the following pattern.

\begin{lemma}\label{lem-key}
	For an $s$-stage RK method of linear order $p$, $\beta_0 = 1$. Furthermore, \\
	(i) if $p$ is odd, then
	\begin{equation}
	k^*= \frac{ p + 1}{2},\quad\beta_{k^*} =  (-1)^{k^*}2{\left(\alpha_{p+1}-\frac{1}{(p+1)!}\right)},
	\end{equation}
	\begin{equation}
	\gamma_{i,j} = -\frac{1}{i!j!(i+j+1)},  \quad  \forall 0\leq i,j\leq k^*-1;
	\end{equation}
	(ii) if $p$ is even, then \\
	\begin{equation}
	k^*\geq \frac{ p}{2}+1,\quad \beta_{\frac{ p}{2}+1} = (-1)^{\frac{p}{2}+1}2\left(\alpha_{p+2}- \alpha_{p+1}+\frac{1}{p!(p+2)}\right),
	\end{equation}
	\begin{equation}
	\gamma_{i,j} = -\frac{1}{i!j!(i+j+1)}  + \iota_{i,j,p}  \quad  \forall 0\leq i,j\leq \frac{p}{2},
	\end{equation}
	where
	\begin{equation}
	\iota_{i,j,p} = \left\{
	\begin{matrix}
	(-1)^{\frac{p}{2}+1}\left(\alpha_{p+1} - \frac{1}{(p+1)!}\right),&\quad i = j = \frac{p}{2},\\
	0,&\quad \text{otherwise.}\end{matrix}\right.
	\end{equation}
\end{lemma}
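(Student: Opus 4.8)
The plan is to compute the coefficients $\beta_k$ and $\gamma_{i,j}$ directly from the expansion \eqref{eq-exps1} combined with \cref{cor-induction}, tracking only the terms that are \emph{low order} in $\tau$, since these are the ones that determine $k^*$ and the leading submatrix. First I would establish the baseline: for the truncated exponential $P_p = \sum_{k=0}^p (\tau L)^k/k!$, the substitution $\alpha_i = 1/i!$ makes $R_p = P_p$ the order-$p$ (indeed possibly higher) approximation of $e^{\tau L}$, so that $\nmH{P_p u}^2$ agrees with $\nmH{e^{\tau L}u}^2$ through order $\tau^{p+?}$. The key observation is that for the true exponential, energy is conserved whenever $L^\top H + HL = 0$, and more precisely $\frac{d}{dt}\nmH{e^{tL}u}^2 = \ip{e^{tL}u, (L^\top H + HL)e^{tL}u}$; expanding this identity in $\tau$ and matching with the form \eqref{eq-exps} forces $\beta_k = 0$ for all $1 \le k \le \lfloor p/2 \rfloor$ (when $p$ odd, through $k = (p-1)/2$; when $p$ even, through $k = p/2$) and pins down the $\gamma_{i,j}$ for $P_p$ to be exactly $-\frac{1}{i!j!(i+j+1)}$. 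Concretely, one checks $\sum_{i+j=2m-1}\frac{1}{i!j!}\cdot(-\frac12)\cdot(\text{sign})$-type sums telescope to zero using the identity $\sum_{i=0}^{2m-1}\frac{(-1)^i}{i!(2m-1-i)!} = 0$, which is just the binomial expansion of $(1-1)^{2m-1}$; I would present this telescoping as the technical heart of part (i).

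For part (i), $p$ odd: write $R_s = P_p + (\tau L)^{p+1}Q_{s-(p+1)}$ as in \eqref{eq-rksplit}. The cross terms between $P_p$ and the perturbation first contribute at order $\tau^{p+1}$, i.e. to $\ipH{u, L^{p+1}u}$ with coefficient $2(\alpha_{p+1} - \frac{1}{(p+1)!})$ (the $u$–vs–$L^{p+1}u$ pairing, since $\alpha_0 = 1$), plus pure-$P_p$ self-interaction terms $\ipH{L^i u, L^j u}$ with $i+j = p+1$. Since $p+1$ is even, \cref{cor-induction} converts all of these into $\pm\nmH{L^{(p+1)/2}u}^2$ plus $\qipH{\cdot,\cdot}$ terms; collecting the $\nmH{L^{(p+1)/2}u}^2$ coefficient and using that the $P_p$-only part of it vanishes (because $P_p$ matches $e^{\tau L}$ to sufficiently high order, so its $\tau^{p+1}$ energy coefficient in the energy-conserving case must be zero — that is exactly the statement $\beta_{(p+1)/2} = 0$ for $P_{p+1}$) leaves $\beta_{k^*} = (-1)^{k^*}2(\alpha_{p+1}-\frac{1}{(p+1)!})$ with $k^* = (p+1)/2$; the sign $(-1)^{k^*}$ comes from the $\zeta_{0,p+1}$ formula with $i+j = p+1$ even, giving $(-1)^{(p+1)/2}$. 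That this is genuinely the leading index requires $\beta_k < 0$ for $1 \le k < k^*$, which is the content of the $P_p$ computation above combined with the claimed exact values of $\gamma_{i,j}$ — and indeed the $\gamma_{i,j}$ for $0 \le i,j \le k^*-1$ are unaffected by the perturbation, because $(\tau L)^{p+1}Q$ contributes only to indices $i+j \ge p+1 = 2k^*-1$, i.e. outside the leading submatrix range $i+j \le 2k^*-3$ (one should double-check the boundary index $i+j = 2k^*-2 = p-1$ carefully, but $p-1 < p+1$ so the perturbation does not reach it). The value $\gamma_{i,j} = -\frac{1}{i!j!(i+j+1)}$ then follows by reading off the coefficient of $\qipH{L^iu,L^ju}$ from applying \cref{cor-induction} to each $\frac{2}{a!b!}\ipH{L^au,L^bu}$ with $a \le i \le j \le b$, $a+b$ odd, $a \ge 0$; summing the geometric-sign contributions and simplifying the resulting double sum to $-\frac{1}{i!j!(i+j+1)}$ is a finite computation I would verify via the integral representation $\frac{1}{i+j+1} = \int_0^1 t^{i+j}\,dt$, which makes the alternating sums collapse.

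Part (ii), $p$ even, is structurally the same but shifted: now $p+1$ is odd, so the first perturbation cross term $\ipH{u, L^{p+1}u}$ is itself (via \cref{prop-routine}) equal to $\pm\frac12\qnmH{L^{p/2}u}^2$ — a $\qipH{\cdot,\cdot}$-type term, not a $\nmH{\cdot}^2$ term — which is exactly why it modifies $\gamma_{p/2,p/2}$ by $\iota_{i,j,p}$ rather than creating a $\beta$ at level $p/2$; hence $k^* \ge p/2 + 1$. To get $\beta_{p/2+1}$ one goes one order further: the $\tau^{p+2}$ energy coefficient receives $2(\alpha_{p+2} - \frac{1}{(p+2)!})$ from the $u$–$L^{p+2}u$ cross term, a contribution $2\alpha_{p+1}\cdot\frac{1}{1!}\cdot(\pm)$ from the $L^1u$–$L^{p+1}u$ cross term after using the recursion, and the pure-$P_p$ piece; bookkeeping these through \cref{cor-induction} and again invoking the vanishing of the $P_{p+2}$-type self-energy at this order yields $\beta_{p/2+1} = (-1)^{p/2+1}2(\alpha_{p+2}-\alpha_{p+1}+\frac{1}{p!(p+2)})$. (The combination $-\alpha_{p+1} + \frac{1}{p!(p+2)}$ rather than $-\frac{1}{(p+1)!}+\frac{1}{(p+2)!}$ arises precisely from the integration-by-parts recursion shuffling the odd-index term.) Finally $\beta_0 = 1$ in all cases is immediate from the $i=j=0$ term $\alpha_0^2 = 1$ in \eqref{eq-exps1}, since no integration by parts is applied to it.

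\textbf{Main obstacle.} The genuine difficulty is not conceptual but combinatorial: proving that the alternating-sign double sums produced by iterating \cref{cor-induction} over all $P_p$ self-interaction pairs $(i,j)$ with fixed $i+j$ collapse to the claimed closed forms — zero for the $\beta_k$ with $k < k^*$, and $-\frac{1}{i!j!(i+j+1)}$ for the $\gamma_{i,j}$. I expect the cleanest route is to bypass direct summation entirely by using the exponential: since $P_p$ and $e^{\tau L}$ agree through order $\tau^{p}$ (and their \emph{energy} expansions agree through a correspondingly high order), and since for $e^{\tau L}$ one can compute $\nmH{e^{\tau L}u}^2$ exactly via $\frac{d}{dt}\nmH{e^{tL}u}^2 = \qipH{e^{tL}u,e^{tL}u}$-type identities integrated in $t$, the low-order coefficients are forced to match those of the exponential's energy expansion, which are manifestly of the stated form. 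Making the phrase "their energy expansions agree through a high enough order" precise — i.e. showing the first discrepancy between $\nmH{P_p u}^2$ and $\nmH{e^{\tau L}u}^2$ occurs at exactly order $\tau^{p+1}$ and is governed solely by $\alpha_{p+1} - \frac{1}{(p+1)!}$ — is the step I would write out most carefully, since it is where the uniqueness from \cref{lem-exps} is implicitly invoked to transfer information between the two expansions.
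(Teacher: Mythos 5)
Your strategy is correct and lands on all the stated coefficient values, but it computes the baseline differently from the paper, and the comparison is worth making explicit. The paper never invokes $e^{\tau L}$: it differentiates $\cE_p(\tau)=\nmH{P_p u}^2$ at $\tau=0$, uses $\frac{d}{d\tau}P_p=P_{p-1}L$ so that for $m\le p+2$ the $m$-th derivative equals $-2\mu+\sum_{k}\binom{m}{k}\ipH{L^k u,L^{m-k}u}$ with explicit corrections $\mu$ only at $m=p+1,p+2$, and then collapses the binomial sum in one stroke via Pascal's rule into $-\sum_{k}\binom{m-1}{k}\qipH{L^k u,L^{m-1-k}u}$; matching with the differentiated energy equality and the uniqueness of \cref{lem-exps} then gives $\beta_k=0$ below the leading index and $\gamma_{i,j}=-\binom{i+j}{i}/(i+j+1)!=-\frac{1}{i!j!(i+j+1)}$ with no alternating double sums ever appearing. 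Your route instead computes the exponential's energy in closed form, $\nmH{e^{\tau L}u}^2=\nmH{u}^2-\sum_{i,j\ge 0}\frac{\tau^{i+j+1}}{i!j!(i+j+1)}\qipH{L^i u,L^j u}$, by integrating $\frac{d}{dt}\nmH{e^{tL}u}^2=-\qnmH{e^{tL}u}^2$, and treats $R_s-e^{\tau L}$, which begins with $(\alpha_{p+1}-\frac{1}{(p+1)!})(\tau L)^{p+1}$, as the perturbation; this is legitimate and arguably cleaner for the baseline (it makes the combinatorial telescoping you worried about unnecessary), and your handling of the first one or two discrepancy orders through \cref{prop-routine} and \cref{cor-induction} coincides with the paper's Step 2, with the correct outcomes including $\frac{1}{(p+1)!}-\frac{1}{(p+2)!}=\frac{1}{p!(p+2)}$ for the even case. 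What your approach buys is a transparent origin for the numbers $-\frac{1}{i!j!(i+j+1)}$; what the paper's buys is that it stays entirely within the finite polynomial setting where \cref{lem-exps} is literally stated, whereas you must—as you yourself flag—equate Taylor coefficients of the two energies order by order (only $m\le p+2$ matters) and invoke the linear-independence argument inside the proof of \cref{lem-exps} at each such order to transfer coefficients from the infinite exponential series. A few slips to clean up, none fatal: the $R_s-P_p$ cross term has coefficient $2\alpha_{p+1}$, while $2(\alpha_{p+1}-\frac{1}{(p+1)!})$ is correct only relative to $e^{\tau L}$, so fix one reference expansion and stick with it; the leading submatrix range is $i+j\le 2k^*-2=p-1$ (not $2k^*-3$), and it is untouched because an order-$\tau^m$ inner product generates $\qipH{\cdot,\cdot}$ terms of index sum exactly $m-1\ge p$; what the baseline gives, and what the leading-index argument actually needs, is $\beta_k=0$ (not $\beta_k<0$) for $1\le k<k^*$; and in the odd case you should cite $\alpha_{p+1}\neq\frac{1}{(p+1)!}$ from \eqref{eq-rksplit} to conclude $\beta_{k^*}\neq 0$ and hence $k^*=\frac{p+1}{2}$ exactly.
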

The proof of this lemma is postponed to the end of the section. 

\subsection{Criteria for strong stability}

With \cref{lem-key}, one can obtain the following theorem regarding strong stability of linear RK methods with
$R_s = P_p$. Note that the case $p\equiv 0 \Mod 4$ is not covered. The difficulty for analyzing this class of methods has already been recognized in \cite{sun2017rk4}.
\begin{theorem}	Consider a linear RK method of order $p$ with $p$ stages.\\
	(i) The method is not strongly stable if $p \equiv 1 \Mod 4$ or $p\equiv 2\Mod 4$. \\
	(ii) The method is strongly stable if $p\equiv 3 \Mod 4$.
\end{theorem}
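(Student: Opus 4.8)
The plan is to reduce the statement entirely to \cref{lem-key} together with the criteria \cref{thm-nec} and \cref{thm-suff}. For a linear RK method of order $p$ with exactly $p$ stages we have $R_p = P_p$, so the ``perturbation'' coefficients $\alpha_{p+1}$ (and $\alpha_{p+2}$, when $p$ is even) that appear in \cref{lem-key} all vanish; the leading index $k^*$, the leading coefficient $\beta_{k^*}$ and the leading submatrix $\Gamma^*$ are then known in closed form, and the proof reduces to tracking the parity of $k^*$ and, in one case, checking a definiteness.

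First I would dispose of the odd orders. For odd $p$, \cref{lem-key}(i) gives $k^* = \frac{p+1}{2}$ and, setting $\alpha_{p+1}=0$, $\beta_{k^*} = (-1)^{k^*}\,2\bigl(0-\tfrac{1}{(p+1)!}\bigr) = (-1)^{k^*+1}\tfrac{2}{(p+1)!}$. When $p\equiv 1\Mod 4$ the integer $k^* = \tfrac{p+1}{2}$ is odd, so $\beta_{k^*}>0$, and \cref{thm-nec} immediately gives that the method is not strongly stable. When $p\equiv 3\Mod 4$ the integer $k^*$ is even, so $\beta_{k^*}<0$, and it remains to show $\Gamma^*$ is negative definite. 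By \cref{lem-key}(i), $\Gamma^* = -\bigl(\tfrac{1}{i!\,j!\,(i+j+1)}\bigr)_{0\le i,j\le k^*-1} = -DGD$ with $D = \diag\bigl(\tfrac{1}{0!},\dots,\tfrac{1}{(k^*-1)!}\bigr)$ and $G = \bigl(\tfrac{1}{i+j+1}\bigr)_{0\le i,j\le k^*-1}$. Using $\tfrac{1}{i+j+1} = \int_0^1 x^{i+j}\,dx$, the matrix $G$ is the Gram matrix of the linearly independent monomials $1, x,\dots, x^{k^*-1}$ in $L^2(0,1)$ (a shifted Hilbert matrix), hence symmetric positive definite; by congruence $DGD$ is positive definite and $\Gamma^*$ is symmetric negative definite. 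Then \cref{thm-suff} yields strong stability, proving part (ii).

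It remains to treat $p\equiv 2\Mod 4$. Here $p$ is even, so \cref{lem-key}(ii) gives $k^*\ge \tfrac{p}{2}+1$ together with, using $\alpha_{p+1}=\alpha_{p+2}=0$, $\beta_{p/2+1} = (-1)^{p/2+1}\,2\bigl(0-0+\tfrac{1}{p!(p+2)}\bigr)$. Writing $p = 4m+2$, the exponent $\tfrac{p}{2}+1 = 2m+2$ is even, so $\beta_{p/2+1} = \tfrac{2}{p!(p+2)} > 0$; since this is nonzero while $k^*\ge\tfrac{p}{2}+1$, the leading index must be exactly $\tfrac{p}{2}+1$, so $\beta_{k^*}>0$ and \cref{thm-nec} once more shows the method is not strongly stable, completing part (i).

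The computations are mechanical once \cref{lem-key} is available; the only step I expect to require a real idea is the negative definiteness of $\Gamma^*$ in the case $p\equiv 3\Mod 4$, which I would obtain via the integral representation of $\tfrac{1}{i+j+1}$ and the classical positive definiteness of the Hilbert matrix. A secondary point is to check that in the case $p\equiv 2\Mod 4$ the inequality $k^*\ge\tfrac{p}{2}+1$ is actually an equality, so that the sign of $\beta_{k^*}$ coincides with that of $\beta_{p/2+1}$ and \cref{thm-nec} applies; no such sharpness is available when $p\equiv 0\Mod 4$, where \cref{lem-key} yields only $\beta_{p/2+1}<0$ and $k^*\ge\tfrac{p}{2}+1$ and hence controls neither the leading coefficient nor the leading submatrix --- this is why that residue class falls outside the framework.
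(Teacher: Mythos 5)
Your proposal is correct and follows essentially the same route as the paper: specialize \cref{lem-key} to $\alpha_{p+1}=\alpha_{p+2}=0$, read off the sign of $\beta_{k^*}$ from the parity of $k^*$ to invoke \cref{thm-nec} in the cases $p\equiv 1 \Mod 4$ and $p\equiv 2\Mod 4$, and for $p\equiv 3\Mod 4$ identify $\Gamma^*$ as minus a diagonal congruence of the Hilbert matrix so that \cref{thm-suff} applies. The only differences are cosmetic: you verify the positive definiteness of the Hilbert matrix via the Gram/integral representation and spell out why $k^*=\frac{p}{2}+1$ when $p\equiv 2\Mod 4$, steps the paper treats as classical or leaves implicit.
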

\begin{proof}
	Note $\alpha_{p+1}= \alpha_{p+2}=0$. $\beta_{k^*}>0$ if $p \equiv 1 \Mod 4$ or $p\equiv 2\Mod 4$. The method can not preserve strong stability due to \cref{thm-nec}. On the other hand, for $p\equiv 3\Mod 4$, the leading submatrix $\Gamma^*$ can be written as $-\Lambda M \Lambda$, where $\Lambda = \text{diag}(1/0!,1/1!,1/2!,\cdots,1/(k^*-1)!)$ and $M$ is the Hilbert matrix of order $k^*$.  $\Gamma^*$ is negative definite since the Hilbert matrix is positive definite. Also note $\beta_{k^*} = -\frac{2}{(p+1)!}$ in such cases. The strong stability can be proved using \cref{thm-suff}. 
\end{proof}

\begin{remark}
	This modulo pattern with periodicity $4$ is closely related with the fact that $i^4 = 1$, in which $i$ is the imaginary unit. One can get a flavor by considering the scalar ODE $\frac{d}{dt} u =( i \omega) u$, $\omega \in \mathbb{R}$. 
\end{remark}

For a method with non-zero $Q_{s-(p+1)}$, 
noting that only limited number of stages would affect the leading coefficient and submatrix, one can conclude the following criteria for strong stability. We highlight that the condition for methods of odd linear order is both necessary and sufficient.

\begin{theorem}\label{thm-odd}
	An RK method of odd linear order $p$ is strongly stable if and only if  
	\begin{equation}\label{eq-thmodd}
	(-1)^{\frac{p+1}{2}}\left(\alpha_{p+1} -\frac{1}{(p+1)!}\right) <0.
	\end{equation}
\end{theorem}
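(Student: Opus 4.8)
The plan is to reduce the statement to \cref{lem-key} together with the necessary condition \cref{thm-nec} and the sufficient condition \cref{thm-suff}; the key point is that for methods of odd linear order these two conditions turn out to be complementary, so the criterion becomes sharp.

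First I would apply \cref{lem-key}(i). For an $s$-stage method of odd linear order $p$ it gives that the leading index is exactly $k^* = \frac{p+1}{2}$, that the leading coefficient is $\beta_{k^*} = (-1)^{k^*}\,2\bigl(\alpha_{p+1} - \frac{1}{(p+1)!}\bigr)$, and that the leading submatrix has entries $\gamma_{i,j} = -\frac{1}{i!\,j!\,(i+j+1)}$ for $0 \le i,j \le k^*-1$. Since the linear order is exactly $p$, we have $\alpha_{p+1} \neq \frac{1}{(p+1)!}$ (this is the assumption in \eqref{eq-rksplit}), so $\beta_{k^*} \neq 0$. Moreover, because $(-1)^{\frac{p+1}{2}} = (-1)^{k^*}$, condition \eqref{eq-thmodd} says precisely that $(-1)^{k^*}\bigl(\alpha_{p+1} - \frac{1}{(p+1)!}\bigr) < 0$, i.e.\ that $\beta_{k^*} < 0$.

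Second I would observe that the leading submatrix $\Gamma^*$ is negative definite for \emph{every} such method. Writing $\Lambda = \diag(1/0!,\,1/1!,\,\ldots,\,1/(k^*-1)!)$ and letting $M$ denote the Hilbert matrix of order $k^*$, the formula for $\gamma_{i,j}$ gives $\Gamma^* = -\Lambda M \Lambda$; since $\Lambda$ is invertible and the Hilbert matrix is positive definite, $\Gamma^*$ is negative definite. This is what upgrades the theorem to an ``if and only if'': for odd linear order the only obstruction to strong stability visible to the framework is the sign of $\beta_{k^*}$.

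Finally I would close with a dichotomy. If \eqref{eq-thmodd} holds, then $\beta_{k^*} < 0$ and $\Gamma^*$ is negative definite, so \cref{thm-suff} produces a $\lambda$ with $\nmH{R_s} \le 1$ whenever $\tau\nmH{L} \le \lambda$, i.e.\ the method is strongly stable. If \eqref{eq-thmodd} fails, then $\beta_{k^*} \neq 0$ forces $\beta_{k^*} > 0$, and \cref{thm-nec} produces, for $L$ with $L^\top H + HL = 0$ and arbitrarily small $\tau\nmH{L}$, operators with $\nmH{R_s} > 1$, so the method is not strongly stable. I do not expect a genuine obstacle in this argument: all of the analytic content sits in \cref{lem-key} (the exact values of $k^*$, $\beta_{k^*}$, $\Gamma^*$), whose proof is deferred to the end of the section; the only point needing a moment's care is that the leading index is genuinely $(p+1)/2$, so that the higher terms $\beta_k \tau^{2k}\nmH{L^k u}^2$ with $k > k^*$ cannot interfere — but these are absorbed for small $\tau\nmH{L}$ exactly as in the proofs of \cref{thm-nec} and \cref{thm-suff}.
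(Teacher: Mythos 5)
Your proposal is correct and follows essentially the same route as the paper: it invokes \cref{lem-key}(i) to identify $k^*=\frac{p+1}{2}$, $\beta_{k^*}=(-1)^{k^*}2\bigl(\alpha_{p+1}-\frac{1}{(p+1)!}\bigr)$ and $\Gamma^*=-\Lambda M\Lambda$ (negative definite via the Hilbert matrix), and then concludes via \cref{thm-suff} in one direction and \cref{thm-nec} in the other, exactly as the paper's (much terser) argument does. The only difference is that you spell out explicitly the dichotomy and the fact $\alpha_{p+1}\neq\frac{1}{(p+1)!}$ rules out $\beta_{k^*}=0$, details the paper leaves implicit.
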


\begin{proof}
	Since $\Gamma^* = -\Lambda M\Lambda$ is always negative definite, the method is strongly stable if and only if $\beta_{k^*}<0$, which corresponds to the prescribed condition in the theorem.
\end{proof}

For RK methods with even linear order, we can only obtain a sufficient condition, which is given in \cref{thm-even}. A similar condition has also been discussed in \cite{ranocha2018l_2}  for $p = 4$. 

\begin{theorem}\label{thm-even}An RK method of even linear order $p$ is strongly stable if 
	\begin{equation}\label{eq-thm-even-1}
	(-1)^{\frac{p}{2}+1}\left(\alpha_{p+2} -\alpha_{p+1}+\frac{1}{p!(p+2)}\right)<0,
	\end{equation}
	and
	\begin{equation}\label{eq-thm-even-2}
	(-1)^{\frac{p}{2}+1}\left(\frac{p}{2}!\right)^2\left(\alpha_{p+1} -\frac{1}{(p+1)!}\right)<\veps.
	\end{equation} 
	Here $\veps$ is the smallest eigenvalue of the Hilbert matrix of order $\frac{p}{2}+1$.
\end{theorem}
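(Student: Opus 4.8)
The plan is to deduce \cref{thm-even} from the sufficient condition \cref{thm-suff}, using \cref{lem-key}(ii) to make the leading index, leading coefficient, and leading submatrix completely explicit, and then to recognize the leading submatrix as a scaled Hilbert matrix plus a rank-one correction whose effect is controlled by the smallest Hilbert eigenvalue.

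First I would apply \cref{lem-key}(ii). Since $p$ is even it yields $k^*\ge\frac p2+1$, the value $\beta_{\frac p2+1}=(-1)^{\frac p2+1}2\bigl(\alpha_{p+2}-\alpha_{p+1}+\frac1{p!(p+2)}\bigr)$, and $\gamma_{i,j}=-\frac1{i!j!(i+j+1)}+\iota_{i,j,p}$ for $0\le i,j\le\frac p2$. Hypothesis \eqref{eq-thm-even-1} is precisely the assertion that $\beta_{\frac p2+1}<0$; in particular $\beta_{\frac p2+1}\ne0$, so the leading index is exactly $k^*=\frac p2+1$ (the degenerate alternative $k^*>\frac p2+1$, which would occur only when $\beta_{\frac p2+1}=0$, is ruled out by the strict inequality), with leading coefficient $\beta_{k^*}=\beta_{\frac p2+1}<0$. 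This gives the first hypothesis of \cref{thm-suff}.

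Next I would show the leading submatrix $\Gamma^*=(\gamma_{i,j})_{0\le i,j\le\frac p2}$ is negative definite. Set $\Lambda=\diag(1/0!,1/1!,\dots,1/(\tfrac p2)!)$ and let $M$ be the Hilbert matrix of order $\frac p2+1$ with entries $M_{i,j}=\frac1{i+j+1}$; by \cref{lem-key}(ii), $\Gamma^*$ is $-\Lambda M\Lambda$ with the single number $c:=(-1)^{\frac p2+1}\bigl(\alpha_{p+1}-\frac1{(p+1)!}\bigr)$ added to its bottom-right entry. For any $x\ne0$ put $y=\Lambda x$; then $x^{\top}\Gamma^* x=-y^{\top}My+c\,((\tfrac p2)!)^2 y_{p/2}^2$, where $y_{p/2}$ is the last component of $y$. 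Since $M$ is positive definite with smallest eigenvalue $\veps$, we have $y^{\top}My\ge\veps\nm{y}^2\ge\veps y_{p/2}^2$; if $c\le0$ then $x^{\top}\Gamma^* x<0$ immediately, and if $c>0$ then \eqref{eq-thm-even-2}, which says exactly $c\,((\tfrac p2)!)^2<\veps$, gives $c\,((\tfrac p2)!)^2 y_{p/2}^2\le c\,((\tfrac p2)!)^2\nm{y}^2<\veps\nm{y}^2\le y^{\top}My$. In all cases $x^{\top}\Gamma^* x<0$, so $\Gamma^*$ is negative definite, and \cref{thm-suff} then produces a $\lambda$ with $\nmH{R_s}\le1$ whenever $\tau\nmH{L}\le\lambda$, i.e., the method is strongly stable.

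Most of the substance is already contained in \cref{lem-key}, which we assume; the remaining work is the congruence-and-eigenvalue bookkeeping above, which parallels the treatment of linear RK methods of order $p\equiv3\Mod 4$. The one place to be careful is the passage to $k^*=\frac p2+1$: \cref{lem-key}(ii) by itself only gives $k^*\ge\frac p2+1$, and if $k^*$ were strictly larger the leading submatrix would exceed order $\frac p2+1$ and the two displayed hypotheses would no longer control it — this is exactly why \eqref{eq-thm-even-1} is imposed as a strict inequality. The genuinely new ingredient compared with the $p\equiv3\Mod 4$ case is the rank-one correction $\iota_{\frac p2,\frac p2,p}$ coming from the $(\tau L)^{p+1}$ term of the scheme, and \eqref{eq-thm-even-2} is precisely the condition that prevents this correction from breaking the definiteness of $\Gamma^*$.
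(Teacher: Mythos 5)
Your proposal is correct and follows essentially the same route as the paper: \cref{lem-key}(ii) together with the strict inequality \eqref{eq-thm-even-1} pins down $k^*=\frac p2+1$ and $\beta_{k^*}<0$, condition \eqref{eq-thm-even-2} gives negative definiteness of $\Gamma^*$, and \cref{thm-suff} concludes. The only difference is that you spell out the verification that $\Gamma^*=-\Lambda M\Lambda$ plus the rank-one corner correction $\iota_{\frac p2,\frac p2,p}$ is negative definite (via the congruence $y=\Lambda x$ and the smallest Hilbert eigenvalue), a step the paper's proof asserts without detail.
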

\begin{proof}
	From \cref{lem-key}, \eqref{eq-thm-even-1}  implies $k^* = \frac{p}{2}+1$ and $\beta_{k^*}<0$. Note $\Gamma^*$ is negative definite with \eqref{eq-thm-even-2}. Strong stability then follows from \cref{thm-suff}.
\end{proof}

We remark that, constraints in \cref{thm-odd} and \cref{thm-even} can be used with order conditions for designing strongly stable RK methods. 

\subsection{Regarding energy conserving systems}
Problems for wave propagations are usually featured with a conserved $L^2$ energy. This conservation is also expected numerically to maintain the accurate shape and phase of the waves in long time simulations. Suppose an energy-conserving spatial discretization is used, for example \cite{chung2009optimal,cheng2017l2,fu2018optimal,fu2019energy}, the resulting method of lines scheme would satisfy $L^\top H + H L =0$. Hence $\frac{d}{dt}\nmH{u}^2 = 0$. While a strongly stable RK time discretization may not preserve this equality, we would like know how the total energy is dissipated. The following interpretation can be obtained based on \cref{lem-exps}. While one can also perform eigenvalue analysis alternatively, since $L$ is normal in $\ipH{\cdot,\cdot}$ for energy conserving systems.

With $L^\top H + HL = 0$, $\qipH{\cdot,\cdot} = 0$. The energy equality in \cref{lem-exps} would then become
\begin{equation}
\nmH{R_s u }^2 = \nmH{u}^2 + \sum_{k = k^*}^{s}\beta_k\tau^{2k}\nmH{L^ku}^2.
\end{equation}
Since $\beta_{k^*}<0$ for strongly stable RK methods, we have dissipative energy unless $L = 0$. 

\begin{proposition}
	With a suitably restricted time step, a strongly stable explicit RK method is energy conserving, if and only if $L = 0$. 
\end{proposition}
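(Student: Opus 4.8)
The plan is to prove the equivalence directly from the energy equality in \cref{lem-exps}, specialized to the energy-conserving case $L^\top H + HL = 0$. First I would observe that under this assumption the semi-inner-product terms all vanish, so that
\begin{equation}
\nmH{R_s u}^2 = \nmH{u}^2 + \sum_{k = k^*}^{s} \beta_k \tau^{2k} \nmH{L^k u}^2,
\end{equation}
as already noted in the text preceding the statement. The method is energy conserving precisely when $\nmH{R_s u} = \nmH{u}$ for all $u$, i.e. when the sum on the right vanishes identically in $u$.

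The ``if'' direction is immediate: if $L = 0$, then $\tau L = 0$, $R_s = \alpha_0 I = I$, and $\nmH{R_s u} = \nmH{u}$ trivially. For the ``only if'' direction, suppose $L \neq 0$. Since the method is strongly stable, \cref{thm-nec} forces $\beta_{k^*} < 0$ (it cannot be positive, and $\beta_{k^*} \neq 0$ by definition of the leading index). I would then argue that $\nmH{L^{k^*} u} \neq 0$ for a suitable choice of $u$: because $L \neq 0$, some power $L^m$ with $1 \le m$ is nonzero, and in fact $L^{k^*} \neq 0$ — this needs a short justification, namely that if $L^{k^*} = 0$ then $L$ would be nilpotent of index $\le k^*$, and one checks this is incompatible with $L$ being normal in $\ipH{\cdot,\cdot}$ (a normal nonzero operator cannot be nilpotent, since $\nmH{Lv}^2 = \ipH{L^\top H L v, v}/\ldots$ — more simply, a normal operator satisfies $\nmH{L^2 v} \cdot \nmH{v} \ge \nmH{Lv}^2$ type identities, or: normal $\Rightarrow$ diagonalizable with real-orthonormal-in-$H$ eigenbasis, and a diagonalizable nilpotent operator is zero). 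Picking $u$ an eigenvector (in the $H$-inner product) of $L$ with eigenvalue $\mu \neq 0$, one gets $\nmH{L^k u} = |\mu|^k \nmH{u}$, so
\begin{equation}
\nmH{R_s u}^2 = \nmH{u}^2 + \Big( \sum_{k = k^*}^{s} \beta_k \tau^{2k} |\mu|^{2k} \Big) \nmH{u}^2,
\end{equation}
and the parenthesized quantity, for $\tau \nmH{L}$ sufficiently small, is dominated by $\beta_{k^*} \tau^{2k^*} |\mu|^{2k^*} < 0$, so $\nmH{R_s u} < \nmH{u}$, contradicting energy conservation. Hence $L = 0$.

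The main obstacle, such as it is, is the small technical point that $L^{k^*} \neq 0$ when $L \neq 0$: one must rule out the degenerate possibility that $L$ is nonzero but nilpotent of low index, which would make all the $\nmH{L^k u}^2$ terms with $k \ge k^*$ vanish and allow conservation with $L \neq 0$. The clean resolution is to invoke normality of $L$ with respect to $\ipH{\cdot,\cdot}$ (valid exactly because $L^\top H + HL = 0$, as remarked in the text): a normal operator is unitarily diagonalizable in that inner product, and a nonzero diagonalizable operator has a nonzero eigenvalue, hence $L^{k^*}$ acts as multiplication by $\mu^{k^*} \neq 0$ on the corresponding eigenvector. Alternatively, without eigenvectors: normality gives $\nmH{L^{k} v} = \nmH{L^{k-1}(Lv)} = \cdots$ and more usefully $\nmH{L^{2}v} \nmH{v} \ge \nmH{Lv}^2$, which iterated shows $L^{k^*} \neq 0$ whenever $L \neq 0$. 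Everything else is a direct reading-off of signs from the already-established energy equality, so the proof is short.
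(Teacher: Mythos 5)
Your proof is correct and takes essentially the same route as the paper: with $L^\top H + HL = 0$ the energy equality collapses to $\nmH{R_s u}^2 = \nmH{u}^2 + \sum_{k\geq k^*}\beta_k\tau^{2k}\nmH{L^k u}^2$, strong stability together with \cref{thm-nec} forces $\beta_{k^*}<0$, and for $\tau\nmH{L}$ small the leading term dominates, so energy is strictly dissipated unless $L^{k^*}=0$; your added observation that a nonzero $L$ that is skew-adjoint (hence normal) in $\ipH{\cdot,\cdot}$ cannot be nilpotent is precisely the detail the paper leaves implicit. The only caution is that the eigenvalues of the real skew-adjoint $L$ are purely imaginary with complex eigenvectors, so the final computation is cleaner with your eigenvector-free variant (or by restricting to a real invariant two-dimensional subspace) rather than plugging a complex eigenvector into the real energy equality, as you already indicate.
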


At the final time $T = n \tau$, $\nmH{u^n}^2 = \nmH{u^0}^2 + \cO(\tau^{2k^*-1})$. The total numerical dissipation due to the time integrator would then be of order $2k^*-1$. We refer to $2k^*-1$ as the energy accuracy of the RK time integrator. With $k^*$ determined in \cref{lem-key}, we have the following proposition. 

\begin{proposition}\label{prop-ec}
	Consider an RK method of linear order $p$ applied on an energy conserving system.\\
	(i) The order of energy accuracy is $p$ if $p$ is odd.\\
	(ii) The order of energy accuracy is at least ${p}+1$ if $p$ is even. It achieves higher energy accuracy if and only if $\alpha_{p+2} = \alpha_{p+1} - \frac{1}{p!(p+2)}$.
\end{proposition}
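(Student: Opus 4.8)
The plan is to read everything off \cref{lem-key}. By construction (see the discussion preceding the proposition), on an energy conserving system the $\qipH{\cdot,\cdot}$ terms in \cref{lem-exps} drop out, telescoping the one-step identity over the $n=T/\tau$ time steps yields $\nmH{u^n}^2=\nmH{u^0}^2+\cO(\tau^{2k^*-1})$, and the order of energy accuracy is \emph{defined} to be $2k^*-1$. So the proposition is simply a translation of the formulas for the leading index $k^*$ supplied by \cref{lem-key}, and the entire argument is bookkeeping.

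For (i), suppose $p$ is odd. Then \cref{lem-key}(i) gives $k^*=\frac{p+1}{2}$, with $\beta_{k^*}=(-1)^{k^*}2\big(\alpha_{p+1}-\frac{1}{(p+1)!}\big)\neq 0$ because the linear order being exactly $p$ forces $\alpha_{p+1}\neq\frac{1}{(p+1)!}$ (cf. \eqref{eq-rksplit}). Hence the order of energy accuracy is $2k^*-1=p$.

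For (ii), suppose $p$ is even. Then \cref{lem-key}(ii) gives $\beta_1=\cdots=\beta_{p/2}=0$ and $k^*\geq\frac{p}{2}+1$, so $2k^*-1\geq p+1$. The energy accuracy strictly exceeds $p+1$ exactly when $k^*\geq\frac{p}{2}+2$, and since $\beta_1=\cdots=\beta_{p/2}=0$ this is equivalent to $\beta_{p/2+1}=0$ as well; by the explicit value $\beta_{p/2+1}=(-1)^{p/2+1}2\big(\alpha_{p+2}-\alpha_{p+1}+\frac{1}{p!(p+2)}\big)$ from \cref{lem-key}(ii), the latter holds if and only if $\alpha_{p+2}=\alpha_{p+1}-\frac{1}{p!(p+2)}$, which is the claimed criterion.

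The main obstacle is \cref{lem-key} itself; granting it, there is nothing further to do. The only point deserving a word of care is that $2k^*-1$ is genuinely the order and not merely an upper bound: since $\beta_{k^*}\neq 0$ and one may pick a nonzero $L$ with $L^\top H+HL=0$ together with data $u$ for which $L^{k^*}u\neq 0$, the leading term $\beta_{k^*}\tau^{2k^*}\nmH{L^{k^*}u}^2$ does not vanish, so the energy genuinely changes per step at that order.
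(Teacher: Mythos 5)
Your proposal is correct and follows essentially the same route as the paper: the proposition is obtained by specializing the energy equality to $L^\top H+HL=0$, invoking the definition of energy accuracy as $2k^*-1$, and reading off $k^*$ and $\beta_{\frac{p}{2}+1}$ from \cref{lem-key}. Your added remarks (that linear order exactly $p$ forces $\alpha_{p+1}\neq\frac{1}{(p+1)!}$, and that $\beta_{k^*}\neq 0$ makes $2k^*-1$ the exact order rather than an upper bound) are consistent with the paper's setup and require no changes.
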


One can see from \cref{prop-ec}, the linearly even order RK methods achieve at least one degree higher order of energy accuracy than we usually expect. Hence compared with the odd order methods, they may be more suitable for integrating systems modeling wave propagations. This also reveals the fact that even order methods introduce less numerical dissipation, and explains why it is harder to achieve strong stability. 

\subsection{Proof of \cref{lem-key}}
We now prove \cref{lem-key}. Instead of using the mathematical induction, we provide a more motivated proof below. Let $\cE_s(\tau) = \nmH{R_s(\tau)}^2$. The idea is to use $\frac{d^m}{d\tau^m}\cE(0)$ to determine coefficients in the energy equality. For clearness of the presentation, we first consider the case $Q_{s-(p+1)} = 0$ and then move on to general RK methods. As a convention, matrices and coefficients with negative or fractional indexes are considered as $0$.

\begin{proof}
	\textbf{Step 1: ($R_s = P_p.$)} \smallskip
	
	It is easy to check $\beta_0 =\alpha_0^2 = 1$.
	For $m>0$, using the fact that 
	\begin{equation}
	\ddtau{m} \ipH{v,w} = \sum_{k = 0}^m \binom{m}{k}\ipH{ \ddtau{k}v, \ddtau{m-k}w}
	\end{equation}	
	and
	\begin{equation}
	\dtau P_p = P_{p-1} L,
	\end{equation}
	we have
	\begin{equation}
	\ddtau{m} \cE_p(\tau)  = \sum_{k=0}^m\binom{m}{k}\ipH{P_{p-k} L^k u,P_{p-(m-k)}L^{m-k} u}.
	\end{equation}
	Noting that $P_{p-k}(0) = I$ if $k\leq p$, for $1\leq m \leq p+2$,  one can obtain
	\begin{equation}\label{eq-d1}
	\ddtau{m}\cE_p(0) =-2 \mu + 
	\sum_{k=0}^m\binom{m}{k}\ipH{L^k u,L^{m-k} u} ,
	\end{equation}
	where
	\begin{equation}\label{eq-d12}
	\mu =
	\left\{\begin{array}{cc}
	0,& \qquad 1\leq m\leq p,\\
	\ipH{u,L^{p+1} u},& \qquad m = p+1,\\
	\ipH{u,L^{p+2} u}+ (p+2)\ipH{Lu, L^{p+1} u},& \qquad m = p+2.
	\end{array} \right.
	\end{equation}
	Furthermore, since
	\begin{equation}
	\begin{aligned}
	&\sum_{k=0}^m\binom{m}{k}\ipH{L^k u,L^{m-k} u}\\ 
	=& \sum_{k=1}^m\binom{m-1}{k-1}\ipH{L^k u,L^{m-k} u}+\sum_{k = 0}^{m-1}\binom{m-1}{k}\ipH{L^k u,L^{m-k} u}\\
	=& \sum_{k=0}^{m-1}\binom{m-1}{k}
	\left(\ipH{L^{k+1} u,L^{m-(k+1)} u}+\ipH{L^k u,L^{m-k} u}\right)\\
	=& -\sum_{k=0}^{m-1}\binom{m-1}{k}
	\qipH{L^{k} u,L^{m-1-k} u},\\
	\end{aligned}
	\end{equation}
	\eqref{eq-d1} can be written as
	\begin{equation}\label{eq-d11}
	\ddtau{m}\cE_p(0) =
	-2\mu-\sum_{k=0}^{m-1}\binom{m-1}{k}\qipH{L^{k} u,L^{m-1-k} u}, \qquad 1\leq m\leq p+2.
	\end{equation}
	On the other hand, by differentiating the expansion in \cref{lem-exps}, we obtain
	\begin{equation}\label{eq-d21}
	\ddtau{m}\cE_p(0) = \beta_{\frac{m}{2}} m! \nmH{L^\frac{m}{2} u}^2 + \sum_{ k = 0}^{p-1} \gamma_{k,m-1-k} m!\qipH{L^{k} u, L^{m-1-k}{u}}.
	\end{equation}
	Due to the uniqueness of the expansion,  coefficients in \eqref{eq-d11} and \eqref{eq-d21} must be the same. With $m = 1,\cdots,p$, one can get 
	\begin{equation}
	\beta_{\frac{m}{2}} = 0,\qquad \forall 1\leq m\leq  p
	\end{equation} and 
	\begin{equation}\label{eq-gamma-1}
	\gamma_{i,j} = -\frac{1}{(i+j+1)!}\binom{i+j}{i} = -\frac{1}{i!j!(i+j+1)},\qquad i+j\leq p - 1 ,\quad  i, j \geq 0.
	\end{equation} 
	
	\textbf{Case I: ($p$ is odd.)} If $p$ is odd, then
	\begin{equation}
	\beta_k = 0,\qquad  k = 1,2,\cdots, \frac{p-1}{2}.
	\end{equation}
	We need to compute $\ddtau{p+1}\cE_p(0)$ to determine $\beta_{\frac{p+1}{2}}$. With $m = p+1$, \eqref{eq-d11} and \eqref{eq-d21} imply the following identity. 
	\begin{equation}\label{eq-odd-1}
	\begin{aligned}
	\ddtau{p+1}\cE_p(0) &= \beta_{\frac{p+1}{2}} (p+1)! \nmH{L^{\frac{p+1}{2}} u}^2 + \sum_{ k = 0}^{p-1} \gamma_{k,p-k} (p+1)!\qipH{L^{k} u, L^{p-k}{u}}
	\\ &= -2\ipH{u,L^{p+1}u}-\sum_{k=0}^{p}\binom{p}{k}\qipH{L^{k} u,L^{p-k} u}.
	\end{aligned}
	\end{equation}
	From \cref{cor-induction}, we have
	\begin{equation}\label{eq-odd-2}
	\ipH{u,L^{p+1}u} = (-1)^{\frac{p+1}{2}}\nmH{L^{\frac{p+1}{2}}u}^2-\sum_{k=0}^{\frac{p-1}{2}}(-1)^{k}\qipH{L^k u,L^{p-k} u}.
	\end{equation} 
	Hence \eqref{eq-odd-1} together with \eqref{eq-odd-2} gives
	\begin{equation}\label{eq-odd-bstar}
	\beta_{\frac{p+1}{2}} = -(-1)^{\frac{p+1}{2}} \frac{2}{(p+1)!} \neq 0,
	\end{equation}
	which implies $k^* = \frac{p+1}{2}$. For $0\leq i,j\leq k^*-1$, we have $i + j \leq 2k^* -2 = p-1$. Hence the leading submatrix $\Gamma^*$ is completely determined by \eqref{eq-gamma-1}.\smallskip
	
	\textbf{Case II: ($p$ is even).} For even $p$, 
	\begin{equation}
	\beta_k = 0,\qquad  k = 1,2,\cdots, \frac{p}{2}.
	\end{equation}
	Note $\beta_{\frac{p}{2}+1}$ appears in the expansion of  $\ddtau{p+2}\cE_p(0)$. With $m = p+2$, one can derive from \eqref{eq-d11} and \eqref{eq-d21} that 
	\begin{equation}\label{eq-even-1}
	\begin{aligned}
	&\ddtau{p+2}\cE_p(0)\\
	 =& \beta_{\frac{p}{2}+1} (p+2)! \nmH{L^{\frac{p}{2}+1} u}^2 + \sum_{ k = 0}^{p-1} \gamma_{k,p+1-k} (p+2)!\qipH{L^{k} u, L^{p+1-k}{u}}
	\\ =& -2\ipH{u,L^{p+2}u}-2(p+2)\ipH{Lu,L^{p+1}u}-\sum_{k=0}^{p+1}\binom{p+1}{k}\qipH{L^{k} u,L^{p+1-k} u}.
	\end{aligned}
	\end{equation}
	From \cref{prop-routine}, we have
	\begin{equation}\label{eq-even-2}
	\ipH{u,L^{p+2}u} = (-1)^{\frac{p}{2}+1}\nmH{L^{\frac{p}{2}+1}u}^2-\sum_{k=0}^{\frac{p}{2}}(-1)^{k}\qipH{L^{k} u,L^{{p+1}-k} u}
	\end{equation}
	and
	\begin{equation}\label{eq-even-3}
	\ipH{Lu,L^{p+1}u} = (-1)^{\frac{p}{2}}\nmH{L^{\frac{p}{2}+1}u}^2-\sum_{k=0}^{\frac{p}{2}-1}(-1)^{k}\qipH{L^{k+1} u,L^{{p}-k} u}.
	\end{equation} 
	After identifying the coefficients of $\nmH{L^{\frac{p}{2}+1}u}^2$ in  \eqref{eq-even-1} with \eqref{eq-even-2} and \eqref{eq-even-3}, one can get
	\begin{equation}
	\beta_{\frac{p}{2}+1} =  (-1)^{\frac{p}{2}+1}
	\frac{2}{p!(p+2)}\neq 0.
	\end{equation}
	Hence $k^* = \frac{p}{2}+1$. 
	
	Except for $\gamma_{\frac{p}{2},\frac{p}{2}}$, all other $\gamma_{i,j}$ with $i+j\leq p-1$ are clarified in \eqref{eq-gamma-1}. 
	To determine $\gamma_{\frac{p}{2},\frac{p}{2}}$, we need to consider  $\frac{d^{p+1}}{d\tau^{p+1}}\cE_p(0)$. Note that \eqref{eq-odd-1} holds regardless of the parity of $p$. Furthermore, with $p$ being even, 
	\begin{equation}\label{eq-even-4}
	\ipH{u,L^{p+1}u} = (-1)^{\frac{p}{2}+1} \frac{1}{2}\qnmH{L^\frac{p}{2}u}^2 - \sum_{k = 0}^{\frac{p}{2}-1}(-1)^k\qipH{L^k u , L^{p-k}u}.
	\end{equation}Hence by comparing the coefficient of $\qnmH{L^\frac{p}{2}u}^2$ in \eqref{eq-odd-1} with that in \eqref{eq-even-4}, we get
	\begin{equation}
	\gamma_{\frac{p}{2},\frac{p}{2}} = -\frac{1}{(\frac{p}{2})!(\frac{p}{2})!(p+1)} -(-1)^{\frac{p}{2}+1}\frac{1}{(p+1)!}.
	\end{equation}
	Now we have proved \cref{lem-key} for $R_s = P_p$. \bigskip
	
	\textbf{Step 2: $R_s = P_p + (\tau L)^{p+1} Q_{s-(p+1)}$.}\smallskip
	
	The proof of the general case is based on the fact that not all high order stages will contribute to $k^*$ and $\Gamma^*$. Note that
	\begin{equation}
	\nmH{R_su}^2 = \nmH{P_pu}^2 + \sum_{\max(i,j)>p} \alpha_i \alpha_j \tau^{i+j}\ipH{L^iu,L^ju}.
	\end{equation}
	If $p$ is odd, from \cref{cor-induction}, only $2\alpha_{p+1}\tau^{p+1}\ipH{L^0u,L^{p+1}u}$ (since $\alpha_0 = 1$) in the second term would affect $\beta_{\frac{p+1}{2}}$. With this additional term being considered, instead of  \eqref{eq-odd-1}, we obtain
	\begin{equation}\label{eq-general-1}
	\begin{aligned}
	\ddtau{p+1}\cE_s(0) &= \beta_{\frac{p+1}{2}} (p+1)! \nmH{L^{\frac{p+1}{2}} u}^2 + \sum_{ k = 0}^{p-1} \gamma_{k,p-k} (p+1)!\qipH{L^{k} u, L^{p-k}{u}}
	\\ &= (2\alpha_{p+1}(p+1)!-2)\ipH{u,L^{p+1}u}-\sum_{k=0}^{p}\binom{p}{k}\qipH{L^{k} u,L^{p-k} u}.
	\end{aligned}
	\end{equation}
	After expanding $\langle u, L^{p+1} u \rangle$ with \eqref{eq-odd-2}, \eqref{eq-general-1} implies
	\begin{equation}
	\beta_{\frac{p+1}{2}} =(-1)^\frac{p+1}{2}2\left( \alpha_{p+1}-\frac{1}{(p+1)!}\right).
	\end{equation} 
	Since $\alpha_{p+1} \neq \frac{1}{(p+1)!}$,  we have $\beta_{\frac{p+1}{2}} \neq 0$ and $k^{*} = \frac{p+1}{2}$. 
	Also note that $\ipH{L^iu,L^ju}$ can only produce $\qipH{L^{i'}u,L^{j'}u}$ terms with $i'+j' = i+j-1$. Hence if $\max\{i,j\}>p$, $\ipH{L^iu,L^ju}$ can not affect values of $\gamma_{i',j'}$ with $i'+j'\leq 2(k^* -1) = p-1$. In other words, the leading submatrix $\Gamma^*$ is unchanged.
	
	Similarly, for even $p$, we still have $\beta_1 = \cdots = \beta_{\frac{p}{2} } = 0$ after adding extra high order terms, which implies $k^*\geq \frac{p}{2}+1$. When computing $\frac{d^{p+2}}{d\tau^{p+2}}\cE(0)$ to obtain $\beta_{\frac{p}{2}+1}$, an extra term $2\tau^{p+2}\left(\alpha_{p+2}\ipH{L^0u,L^{p+2}}+\alpha_{p+1}\ipH{L^1u,L^{p+1}u}\right)$ should be considered. % ($\alpha_0 = \alpha_1 = 1$).
	Then we have
	\begin{equation}\label{eq-general-2}
	\begin{aligned}
	&\ddtau{p+2}\cE_s(0) \\
	=& \beta_{\frac{p}{2}+1} (p+2)! \nmH{L^{\frac{p}{2}+1} u}^2 + \sum_{ k = 0}^{p-1} \gamma_{k,p+1-k} (p+2)!\qipH{L^{k} u, L^{p+1-k}{u}}
	\\ =& 2\left(\alpha_{p+2}(p+2)!-1\right)\ipH{u,L^{p+2}u}+2\left(\alpha_{p+1}(p+2)!-(p+2)\right)\ipH{Lu,L^{p+1}u}\\
	&-\sum_{k=0}^{p+1}\binom{p+1}{k}\qipH{L^{k} u,L^{p+1-k} u}.
	\end{aligned}
	\end{equation}
	After substituting \eqref{eq-even-2} and \eqref{eq-even-3} into \eqref{eq-general-2}, we get
	\begin{equation}
	\beta_{\frac{p}{2}+1} = (-1)^{\frac{p}{2}+1}2\left(\alpha_{p+2}-\alpha_{p+1}+\frac{1}{p!(p+2)}\right).
	\end{equation}
	As for the corresponding principle submatrix, only $\gamma_{\frac{p}{2},\frac{p}{2}}$ will be changed due to $2 \alpha_{p+1}\tau^{p+1}\ipH{L^0u,L^{p+1}u}$. Then with \eqref{eq-general-1} and \eqref{eq-even-4}, one can obtain
	\begin{equation}
	\gamma_{\frac{p}{2},\frac{p}{2}} = -\frac{1}{(\frac{p}{2})!(\frac{p}{2})!(p+1)} + (-1)^{\frac{p}{2}+1}\left(\alpha_{p+1}-\frac{1}{(p+1)!}\right),
	\end{equation}
	which completes the proof.
\end{proof}

%\begin{remark}[Linear order]
%	The order of RK methods are stated for general nonlinear equations, which may possibly admit higher order accuracy when applying to linear problems. Our theorem is stated based on such linear order of accuracy. 
%\end{remark}
\begin{remark}[Uniqueness in \cref{lem-exps}]
	Without showing the uniqueness in \cref{lem-exps}, one can still prove \cref{lem-key} with induction and then derive other results 
	in \cref{sec-4}. The reason for justifying the uniqueness, is to exclude the possibility that the leading submatrices for even order RK methods are negative definite under another expansion, which may result in an if and only if condition, as that for the methods of odd linear order in \cref{thm-odd}.  
\end{remark}

\section{Conclusions}\label{sec-5}
In this paper, we present a framework on analyzing the strong stability of explicit RK methods for solving semi-negative linear autonomous systems, which are typically obtained from stable method of lines schemes for  hyperbolic problems. With this framework, strong stability in one step or multiple steps of various RK methods are examined. Finally, we analyze the coefficients in the energy equality, based on which, corollaries regarding the criteria of strong stability are derived.

\bibliographystyle{siamplain}
%\bibliography{references}

\end{document}